\def\jobis#1{FF\fi
  \def\predicate{#1}%
  \edef\predicate{\expandafter\strip@prefix\meaning\predicate}%
  \edef\job{\jobname}%
  \ifx\job\predicate
}
\if\jobis{proposal}%
 \numberwithin{equation}{subsection}
 \numberwithin{footnote}{subsection}
 \newtheorem{lem}[subsection]{Lemma}
 \newtheorem{prop}[subsection]{Proposition}
 \newtheorem{thm}[subsection]{Theorem}
 \newtheorem{conj}[subsection]{Conjecture}
    \newtheoremstyle{upright}%
        {8pt plus2pt minus4pt}%
        {8pt plus2pt minus4pt}%
        {\upshape}%
        {}%
        {\bfseries\scshape}%
        {}%
        {1em}%
        {}%
\theoremstyle{upright}
 \newtheorem{defn}[subsection]{Definition}
 \newtheorem{exa}[subsection]{Example}
\title{On singularities of threefold weighted blowups}
\author{Yifei Chen}
\date{\today}
\begin{document}
\maketitle

\begin{abstract}
We answer a conjecture raised by Caucher Birkar of singularities of weighted blowups of $\mathbb{A}^n$ for $n\leq 3$.
\end{abstract}

%\begin{abstract}

%\end{abstract}

%\tableofcontents

%%%%%%%%%%%%%%%%%%%%%%%%
%%%%%%%%%%%%%%%%%%%%%%%%%%

\section{Introduction}

Throughout this paper, we work over the field of complex numbers $\mathbb{C}$.

The concept of complements is introduced by  Shokurov  when he studied log flips of threefolds (\cite{sh92}). The theory of complements is further developed in \cite{sh00},\cite{ps01},\cite{ps09}, \cite{bi19}. It is related to the boundedness of varieties and singularities of linear systems. It becomes a key ingredient of Birkar's BBAB Theorem, which confirms BAB conjecture (\cite{bi19},\cite{bi16}), and the construction of moduli of stable Fano varieties (\cite{blx19}, \cite{x19}).

\begin{thm}[Theorem 1.1 \cite{bi19}]
  Let $d$ be a natural number. Then there is a natural number $m$ depending only on $d$ such that if $X$ is any Fano variety of dimension $d$ with klt singularities, then there exists an $m$-complement for $K_X$.
\end{thm}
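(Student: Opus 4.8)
The plan is to prove the theorem by induction on $d$, but in a substantially enlarged form, since the statement as phrased does not itself carry enough structure for the induction to feed on. Concretely, I would prove boundedness of $n$-complements, with $n$ depending only on $d$ (and on a fixed DCC coefficient set), for every \emph{generalized polarized pair} $(X,B+M)$ that is of Fano type over a base $Z$; the theorem is the special case $Z=\operatorname{pt}$, $B=0$, $M=0$, where an $n$-complement is just an effective $B^+\ge 0$ with $(X,B^+)$ log canonical and $n(K_X+B^+)\sim 0$. Passing to generalized pairs over a base is forced on us: the inductive step produces a lower-dimensional object by adjunction, and that object is never again a klt Fano variety but precisely such a relative generalized pair.

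Fix $d$ and assume the enlarged statement in all dimensions $<d$. After taking a $\mathbb{Q}$-factorial model and running an appropriate MMP I may assume $X$ is $\mathbb{Q}$-factorial with $-K_X$ nef and big. Now split into cases. If $K_X$ carries some $\mathbb{R}$-complement $(X,B^+)$ that is \emph{not} klt (the non-exceptional case), then, using ACC-type results for log canonical thresholds and for numerically trivial pairs together with the MMP, I would first replace it by a non-klt $n_0$-complement with $n_0=n_0(d)$; on a dlt model its non-klt locus contains a prime divisor $S$ that is an lc place. Adjunction along $S$ then gives a $(d-1)$-dimensional generalized pair $(S,B_S+M_S)$ of Fano type over a base, which by induction has a bounded complement, realized by a section of a suitable sheaf. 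The decisive step is to \emph{lift} this section to $X$: twist by an auxiliary $\mathbb{Q}$-divisor supported away from $S$ so that a Kawamata--Viehweg-type vanishing theorem makes the restriction map on global sections surject onto the corresponding space on $S$; the lifted section yields an $n$-complement of $K_X$ after enlarging $n$ by a bounded factor.

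In the exceptional case, where every $\mathbb{R}$-complement of $K_X$ is klt, there is no non-klt centre to adjunct to, so I would argue through \emph{effective birationality}: for some $m=m(d)$ the system $|-mK_X|$ defines a birational map. This is itself part of the same induction and rests on a uniform lower bound $\operatorname{vol}(-K_X)\ge c(d)>0$ for the Fano varieties occurring here (exceptionality supplies a uniform lower bound on log discrepancies, so $X$ is $\epsilon(d)$-lc, and the volume bound is then carried along the induction) together with the technique of creating and tie-breaking non-klt centres through general points to drop dimension. Given birationality, pick $G\in|-mK_X|$ with controlled multiplicity at a general point; since $(X,\tfrac1m G)$ is klt with $K_X+\tfrac1m G\sim_{\mathbb Q}0$, a final bounding argument --- connectedness of non-klt loci plus the lower-dimensional statements once more --- upgrades it to the desired bounded $n$-complement.

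The hard part will be the lifting step in the non-exceptional case: reconciling ``$B^+$ restricted to $S$'' with ``the boundary produced by adjunction'', keeping track of the discrepancy contributed by the $b$-nef part $M_S$, and engineering exactly the positivity needed to apply vanishing --- this is precisely where the generalized-pair formalism earns its keep. A close second difficulty is the self-referential nature of the exceptional case: effective birationality, the anticanonical volume lower bound, and the fact that exceptional Fanos are uniformly $\epsilon$-lc cannot be cited off the shelf but must be proven in lockstep with the complement bound inside the same induction.
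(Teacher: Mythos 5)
This theorem is not proven in the present paper: it is quoted verbatim from Birkar's \emph{Anti-pluricanonical systems on Fano varieties} (cited here as \cite{bi19}, Theorem 1.1) purely as background motivation for the complement conjectures, so there is no in-paper proof against which to compare your proposal. That said, your sketch is a faithful high-level summary of Birkar's actual argument: the enlargement to generalized polarized pairs of Fano type over a base, the induction on dimension, the dichotomy between the non-exceptional case (pass to a dlt model, adjoin to an lc place $S$, invoke the $(d-1)$-dimensional complement, lift a section back to $X$ via a Kawamata--Viehweg-type vanishing after a careful twist) and the exceptional case (uniform $\epsilon$-lc bound, lower bound on $\operatorname{vol}(-K_X)$, effective birationality of $|-mK_X|$, creation and tie-breaking of non-klt centres through general points), together with the observation that these ingredients must be established simultaneously within one induction rather than cited independently. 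You correctly identify the two genuinely hard points --- the lifting step and the self-referential structure of the exceptional case --- which is where the bulk of \cite{bi19} is spent. As a proof this is of course only a roadmap (none of the vanishing, ACC, or boundedness inputs is actually supplied), but as a reconstruction of the strategy it is accurate; if you want to turn it into a proof you would essentially be reproducing \cite{bi19} together with its companion paper \cite{bi16}. For the purposes of the present paper, nothing more than the citation is required, since the theorem is used only to motivate Conjectures~\ref{C:Shokurov}--\ref{Q:Birkar} and plays no role in the proof of the Main Theorem on weighted blowups.
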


There is a general setting $(\epsilon,n)$-complements for log pairs $(X,B)$. we refer to (\cite{bi19} 2.18)  and (\cite{hls19}) for the definition and some related conjectures. It is conjectured the existence of such $(\epsilon,n)$-complements if $B\in \Gamma$, where $\Gamma\subseteq [0,1]$ is a DCC set. It is proved by Shokurov for the case $\epsilon=0$, $\dim X=2$ and $\Gamma$ is the standard set; by Prokhorov and Shokurov for the case $\epsilon=0,\dim X=3$ and $\Gamma$ is a hyperstandard set; by Birkar (\cite{bi19} Theorem 1.7, 1.8) for the case $\epsilon=0$, $\Gamma$ is a hyperstandard set; by  Han,  Liu and Shokurov (\cite{hls19} Teorem 1.14) for the case $\Gamma$ is a DCC set. Recently, Shokurov  (\cite{sh19}) announces a proof of the boundedness of complements in this general case, $\epsilon=0$ and $\Gamma=[0,1]$, under some conditions.  Shokurov (\cite{sh19}) also proves the existence of complements imply some important and well-known results, such as the ACC of log canonical thresholds, the finite generation of log canonical rings for klt pairs and the existence of flips. There are some recent progress for the theory of complements. We refer reads to \cite{fmx19}, \cite{hls19}, \cite{sh19} and \cite{xu19}  for more details.

The following conjecture due to Shokurov is one of major important open problems in birational geometry.

\begin{conj}[Shokurov]  \label{C:Shokurov}
  Let $d$ be a natural number, $\epsilon$ a positive real number. Then there is a natural number $n$ depending only on $d$ and $\epsilon$ such that if $X\rightarrow Z$ is a Fano contraction, that is $-K_X$ is ample $/Z$, and $X$ is $\epsilon$-lc, then there exists an $n$-klt complement for $K_X$ over any point $z\in Z$.
\end{conj}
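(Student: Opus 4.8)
The plan is to combine Birkar's boundedness of $n$-complements for Fano type varieties (\cite{bi19}) with the BBAB theorem (\cite{bi19},\cite{bi16}) and an induction on $d=\dim X$, the heart of the matter being to upgrade an a priori merely lc complement to a klt one, using that $\epsilon>0$. Since an $n$-klt complement over $z$ concerns only a neighbourhood of the fibre over $z$, I would first replace $Z$ by the germ $(Z\ni z)$ and run a $K_X$-MMP over $Z$, which terminates with a Mori fibre space (as $-K_X$ is ample$/Z$) and does not decrease log discrepancies; so one may assume $\rho(X/Z)=1$ with $X$ still $\epsilon$-lc, and by BBAB the fibres over closed points lie in a bounded family $\mathcal P=\mathcal P(d,\epsilon)$. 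Because $(X,0)$ is klt and $-K_X$ is ample$/Z$, the variety $X$ is of Fano type over $Z$, so \cite{bi19} already furnishes an $n_0$-complement $(X,B^+)$ of $K_X$ over $z$ with $n_0=n_0(d)$ --- the only defect being that it is lc rather than klt.

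\textbf{Base case $\dim Z=0$.} Then $X\in\mathcal P$, so for a fixed $m=m(d,\epsilon)$ the divisor $-mK_X$ is very ample on the whole family with bounded $h^0$; a general $B^+\in\tfrac1m|-mK_X|$ gives $m(K_X+B^+)\sim0$, and Bertini on the bounded family together with $\epsilon>0$ shows $(X,B^+)$ is $\epsilon'$-lc, hence klt. So $n=m$ works.

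\textbf{Inductive step $\dim Z\ge1$.} Over the generic point $\eta\in Z$ the fibre $X_\eta$ is $\epsilon$-lc Fano over $k(\eta)$, so the base case (after spreading out) yields an $m_0$-klt complement over an open $U\ni\eta$; let $B^+$ be its closure on $X$, so that $m_0(K_X+B^+)\sim0/Z$ near $z$ up to a bounded twist and $(X,B^+)$ is klt over $U$ but possibly only lc --- or even non-lc --- along the central fibre $X_z$. To repair this one must understand the degeneration $X_\eta\rightsquigarrow X_z$: using adjunction and inversion of adjunction along $X_z$, together with the canonical bundle formula for the induced fibration in the language of generalized pairs, I would descend to a lower-dimensional generalized Fano contraction over $(Z\ni z)$, apply the inductive hypothesis there, and pull the resulting complement back. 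For this pull-back to remain klt along $X_z$ one needs a uniform bound on the divisorial valuations of small log discrepancy that can appear over $X_z$; for $d\le3$ this is exactly what the classification of weighted blowups of $\A^{d}$ established in this paper supplies.

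\textbf{The main obstacle.} The crux is the inductive step: controlling the "moduli" contribution of the degeneration to the central fibre amounts to a uniform bound on the $\Q$-Cartier index of the moduli part in the canonical bundle formula --- equivalently, on the log discrepancies of the valuations that could become lc or non-lc places of the naive extension of $B^+$ --- which is known only in low dimension or under strong hypotheses. Without it the multiple $n$ relating to $m_0$ is uncontrolled and the induction collapses; this is why the present paper's input is decisive precisely for $d\le 3$. A secondary but unavoidable difficulty is that every step --- relative MMP, spreading out, complement-lifting --- must be carried out over the non-proper germ $(Z\ni z)$, so one needs relative forms of effective birationality and of Birkar's complement theory throughout.
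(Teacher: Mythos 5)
This statement is a \emph{conjecture} in the paper, not a theorem: the author records Shokurov's conjecture as motivation, notes it is known only up to $\dim X=2$ (citing \cite{bi04}), and proves nothing about it; the paper's actual results concern Birkar's Conjecture \ref{Q:Birkar} on $\epsilon$-lc weighted blowups of $\mathbb{A}^n$ for $n\le 3$. So there is no proof in the paper to compare against, and your proposal cannot be checked against one.

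More importantly, your proposal is not a proof, and its central claim about this paper is wrong. You yourself identify the crux --- upgrading an lc complement to a klt one along the special fibre, i.e.\ uniformly controlling the valuations of small log discrepancy (equivalently the moduli part in the canonical bundle formula) in the degeneration $X_\eta\rightsquigarrow X_z$ --- and then assert that ``for $d\le 3$ this is exactly what the classification of weighted blowups of $\mathbb{A}^d$ established in this paper supplies.'' It is not: the paper shows only that for an $\epsilon$-lc weighted blowup $X_{\textbf{a}}$ of $\mathbb{A}^n$, $n\le 3$, the smallest weight $a_1$ is bounded in terms of $\epsilon$. This is a statement about one very special class of toric extractions of a smooth point; it gives no bound on lc places or on the index of the moduli part for an arbitrary Fano contraction over a germ $(Z\ni z)$, and Shokurov's conjecture remains open for $d\ge 3$. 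Secondary steps are also shaky: after running a $K_X$-MMP over $Z$ the special fibres need not be $\epsilon$-lc (or even normal), so BBAB does not put them in a bounded family as claimed; and transferring the constructed complement back through the MMP and the spreading-out over the non-proper germ is asserted rather than argued. As written, the argument reduces the conjecture to an unproved (and essentially equivalent) lifting statement, so the gap is genuine.
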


The conjecture is known up to $\dim X=2$ (\cite{bi04}).

Birkar raises the following conjectures which are related to Conjecture \ref{C:Shokurov}.

Let $n\in \mathbb{N}_+$ be a positive integer, $a_1,a_2,\ldots,a_n$ coprime positive integers such that
$a_1\leq a_2\leq \cdots\leq a_n$. Let $X_{\textbf{a}}$ be the weighted blow up of the affine $n$-plane $\mathbb{A}^n$ at the origin with
the weight $\textbf{a}=(a_1, a_2,\ldots, a_n)$ (see Section \ref{S:wblowup}).

\begin{conj}[Birkar] \label{C:Birkar1} If $X_{\textbf{a}}$ has terminal singularities, is $a_1$ bounded?
\end{conj}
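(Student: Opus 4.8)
The plan is to convert the hypothesis that $X_{\mathbf a}$ is terminal into an explicit arithmetic condition on the weights and then bound $a_1$ directly. Recall from Section~\ref{S:wblowup} that $X_{\mathbf a}$ is a toric variety: its fan refines the positive orthant defining $\mathbb{A}^n$ by inserting the ray through $(a_1,\dots,a_n)$, so $X_{\mathbf a}$ is covered by $n$ affine charts, the $i$-th being the cyclic quotient singularity
\[
U_i\;\cong\;\mathbb{A}^n/\mu_{a_i}\quad\text{of type}\quad \tfrac{1}{a_i}\bigl(-a_1,\dots,-a_{i-1},\,1,\,-a_{i+1},\dots,-a_n\bigr),
\]
the coordinate of weight $1$ making the $\mu_{a_i}$-action faithful. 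Every singularity of $X_{\mathbf a}$ lies on the exceptional divisor and hence inside one of these charts, so $X_{\mathbf a}$ is terminal if and only if each $U_i$ is a terminal cyclic quotient singularity. For $n=1$ this is empty and $a_1=1$ by coprimality; for $n=2$ a terminal surface singularity is smooth, so each $U_i$ being smooth forces $a_1=a_2=1$. The substance is therefore the case $n=3$, where I claim the sharpest possible statement holds: $a_1=1$.

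Assume $n=3$ and $X_{\mathbf a}$ terminal. A terminal threefold has \emph{isolated} singularities; but if $\gcd(a_i,a_j)>1$ for some $i\ne j$, then the subgroup $\mu_{\gcd(a_i,a_j)}\subset\mu_{a_i}$ fixes a coordinate axis in $U_i$, producing a one-dimensional locus of quotient singularities. Hence $a_1,a_2,a_3$ are pairwise coprime. I now invoke the classification of three-dimensional terminal cyclic quotient singularities (Morrison--Stevens; Reid): $\tfrac1r(b_1,b_2,b_3)$ is terminal if and only if, after a permutation and multiplication by a unit modulo $r$, it takes the form $\tfrac1r(1,e,-e)$ with $\gcd(e,r)=1$. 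Applied to $U_i=\tfrac1{a_i}(1,-a_j,-a_k)$ with $\{j,k\}=\{1,2,3\}\setminus\{i\}$ — one of the three weights must be scaled to $1$, and the other two must then sum to $0$ modulo $a_i$ — and using pairwise coprimality to guarantee the necessary units exist, this becomes: $U_i$ is terminal if and only if $a_i$ divides one of $a_j+a_k$, $\ a_j-1$, $\ a_k-1$.

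Finally, extract the bound. Order $a_1\le a_2\le a_3$ and look at the $U_3$-condition. If $a_3\mid a_1+a_2$ then, since $0<a_1+a_2\le 2a_3$ and the equality $a_1+a_2=2a_3$ would force $a_1=a_2=a_3$ hence all of them equal to $1$, we get $a_3=a_1+a_2$. If instead $a_3\mid a_1-1$ or $a_3\mid a_2-1$, then (as $0\le a_1-1,\,a_2-1<a_3$) we get $a_1=1$ or $a_2=1$, and $a_2=1$ forces $a_1=1$. So either $a_1=1$ and we are done, or $a_3=a_1+a_2$. In the remaining case apply the $U_2$-condition: $a_2\mid a_1+a_3=2a_1+a_2$ gives $a_2\mid 2a_1$, hence $a_2\mid 2$ by coprimality, so $a_2\le 2$ and therefore $a_1\le 2$; whereas $a_2\mid a_1-1$ or $a_2\mid a_3-1=(a_1-1)+a_2$ both say $a_2\mid a_1-1$, which (as $0\le a_1-1<a_2$) forces $a_1=1$. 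In every case $a_1\le 2$, and $a_1=2$ is impossible because it would force $a_2=2$ as well, contradicting $\gcd(a_1,a_2)=1$. Hence $a_1=1$; such threefolds do exist (the ordinary blowup $\mathbf a=(1,\dots,1)$), so the answer is affirmative for $n\le 3$, with the optimal bound.

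The only substantial input is the classification of terminal threefold cyclic quotient singularities; everything afterwards is elementary, and the $U_1$-condition is not even used. The point demanding care is the passage from that classification to the divisibility trichotomy for each chart: one must run through all permutations and all unit rescalings that could bring $\tfrac1{a_i}(1,-a_j,-a_k)$ into the form $\tfrac1{a_i}(1,e,-e)$, since omitting a case would invalidate the deduction. The reason the argument does not extend beyond $n\le 3$ is exactly that no comparably clean normal form is available for terminal cyclic quotient singularities in dimension four and higher.
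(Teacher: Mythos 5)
Your argument is correct, and it takes a genuinely different route from the paper. The paper does not prove Conjecture~\ref{C:Birkar1} directly; it proves the $\epsilon$-lc version (Conjecture~\ref{Q:Birkar}) for $n\le 3$ by reducing (Lemma~\ref{L:GNP}) to the existence of a lattice point in the interior of the polytope $C_n^\epsilon$, which is then found via Dirichlet's approximation theorem, with a further projection trick in the $n=3$ case when $a_3/a_2>a_1^\theta$. Since a terminal $X_{\textbf{a}}$ is in particular $1$-lc, the Main Theorem answers Conjecture~\ref{C:Birkar1} for $n\le 3$, but yields only an explicit bound on $a_1$. You instead go chart by chart through the cyclic quotient description of $X_{\textbf{a}}$, force pairwise coprimality from the fact that terminal threefold singularities are isolated, and then feed the Morrison--Stevens/Mori--Reid classification (``two weights sum to zero mod $r$'') into an elementary divisibility argument to obtain the sharp conclusion $a_1=1$. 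This is tidier and stronger in the terminal regime and consistent with Kawakita's theorem quoted in the introduction (where the weights are $(1,a,b)$), but it trades generality for sharpness: it uses the dimension-three classification as a black box and does not touch the $\epsilon$-lc problem for $\epsilon<1$, whereas the Dirichlet-approximation argument works uniformly in $\epsilon$ and is what actually generalizes (conditionally) to higher $n$. Your argument reads correctly throughout, including the non-isolated-singularity step ruling out $\gcd(a_i,a_j)>1$ and the case analysis from the $U_3$- and $U_2$-conditions; the only stylistic quibble is that the ``scale one weight to $1$'' phrasing is unnecessary, since the condition ``some two of the three weights sum to $0\bmod a_i$'' is already invariant under multiplication by a unit.
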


One related result with the Conjecture is the divisorial contraction of terminal threefolds  obtained by Kawakita.

\begin{thm}[\cite{k01} Theorem 1.1]
Let $Y$ be a $\mathbb{Q}$-factorial normal variety of dimension three with only terminal singularities,
and let $f:(Y\supset E)\rightarrow (X \ni P)$ be an algebraic germ of a divisorial contraction
which contracts its exceptional divisor $E$ to a smooth point $P$. Then $f$ is a weighted blowup.
More precisely, we can take local coordinates $x,y,z$ at $P$ and coprime positive integers $a$
and $b$, such that $f$ is the weighted blow-up of $X$ with its weights wt$(x,y,z)=(1,a,b)$.
\end{thm}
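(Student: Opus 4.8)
The plan is to recover $f$ from the divisorial valuation $v:=\operatorname{ord}_E$ of the common function field $K(Y)=K(X)$ and to show that, for a suitable choice of local coordinates $x,y,z$ identifying the germ $(X,P)$ with $(\A^3,0)$, the valuation $v$ is the monomial valuation with weights $(1,a,b)$. Since $f$ is a divisorial, hence $K_Y$-negative extremal, contraction, $-E$ is $f$-ample, so $Y=\Proj_X\bigoplus_{n\ge 0}f_\ast\mathcal O_Y(-nE)$ and $f_\ast\mathcal O_Y(-nE)=\{h\in\mathcal O_{X,P}:v(h)\ge n\}=:\mathfrak a_n$; thus $f$ is completely determined by the filtration $\{\mathfrak a_n\}_n$, equivalently by $v$. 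On the other hand, the $(1,a,b)$-weighted blow-up of $\A^3$ is exactly the model extracting the monomial valuation $w_{(1,a,b)}$ (with $w(x)=1,\ w(y)=a,\ w(z)=b$), i.e.\ $\Proj_{\A^3}$ of $\bigoplus_n\{g\in\C[x,y,z]:w_{(1,a,b)}(g)\ge n\}$. Hence it suffices to produce coordinates $x,y,z$ and coprime positive integers $a,b$ with $v=w_{(1,a,b)}$; comparing Rees algebras then identifies $f$ with the weighted blow-up.

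The heart of the matter is to prove that $v$ is a monomial valuation, equivalently that $\operatorname{gr}_v\mathcal O_{X,P}$ is a polynomial ring on the initial forms of some regular system of parameters $x,y,z$, so that $Y=\Proj_X\bigoplus_n\mathfrak a_n\cong X_{(d_1,d_2,d_3)}$ with $d_i=v(x_i)$. Here I would use the terminal hypothesis on $Y$ repeatedly. A terminal threefold is smooth in codimension two, so $E$ is a normal projective surface with quotient singularities, $-E|_E$ is $f$-ample and hence ample, and $\delta:=a(E,X)$ is a positive integer. Relative Kawamata--Viehweg vanishing ($-E$ being $f$-ample gives $R^{>0}f_\ast\mathcal O_Y(-nE)=0$) together with Reid's orbifold Riemann--Roch on $E$ gives
\[
\dim_{\C}\mathcal O_{X,P}/\mathfrak a_n=\sum_{i=0}^{n-1}\chi\!\left(E,\mathcal O_E(-iE|_E)\right)=\tfrac16\,(E^{3})\,n^{3}+O(n^{2}),
\]
so that $E^{3}$ and, through the lower order terms, the singularities of $E$ are controlled. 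Finally, a terminal threefold singularity is compound Du Val, hence has a Du Val general hyperplane section at each of its finitely many singular points; feeding this into Mori's explicit classification of terminal threefold singularities, one analyses $Y$ chart by chart along $E$, shows that $E\cong\PP(d_1,d_2,d_3)$ with $\mathcal O_E(-E)\cong\mathcal O_{\PP(d_1,d_2,d_3)}(1)$, and concludes that $v$ is the monomial valuation with weights $d_1,d_2,d_3$.

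It remains to pin down the weights. Being divisorial, $v$ has value group $\Z$, so $\gcd(d_1,d_2,d_3)=1$ and $Y=X_{(d_1,d_2,d_3)}$ is the toric weighted blow-up, whose only singularities are the cyclic quotient singularities $\tfrac1{d_i}(-d_j,-d_k,1)$ at the three coordinate points of $E=\PP(d_1,d_2,d_3)$. Since $Y$ is terminal these are isolated, which forces the $d_i$ to be pairwise coprime, and each is then an isolated terminal cyclic quotient singularity; by the terminal lemma this requires, for each $i$, that two of $-d_j,-d_k,1$ sum to $0$ modulo $d_i$, and a short arithmetic check shows that this system of congruences is inconsistent unless $\min_i d_i=1$. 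After renaming, the weights are $(1,a,b)$ with $\gcd(a,b)=1$, which is the assertion.

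The main obstacle is the last ingredient of the second paragraph: passing from the numerical data ``the valuation $v$ looks like a weighted blow-up'' to the genuine statement that $v$ \emph{is} the corresponding monomial valuation. This demands real control of the local analytic structure of $Y$ in a neighbourhood of each chart of $E$, and it is precisely here that the Mori--Reid classification of three-dimensional terminal singularities is indispensable; extracting the normal form of $Y$ along $E$ from that classification is the longest and most delicate part of the argument.
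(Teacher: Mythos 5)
The paper does not prove this statement: it quotes it as Theorem~1.1 of Kawakita's paper [K01], so there is no argument in the paper to compare against. What I can do is assess your strategy on its own terms.

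The overall shape of your proposal is sensible and points at the right ingredients. Recovering $f$ from the valuation $v=\operatorname{ord}_E$ via $Y=\Proj_X\bigoplus_n\mathfrak a_n$, reducing the theorem to showing $v$ is monomial in suitable coordinates, and then using the classification of isolated terminal cyclic quotient singularities at the three corners of the resulting toric model, is a coherent plan. Your final arithmetic is in fact correct: if the $d_i$ are pairwise coprime and all $\geq 2$, say $d_1<d_2<d_3$, the terminality condition for $\tfrac1{d_3}(1,-d_1,-d_2)$ forces $d_1+d_2=d_3$ (the other two congruences are impossible by size), and then the condition for $\tfrac1{d_2}(1,-d_1,-d_3)$ reduces to $d_2\mid 2d_1$, hence $d_2=2$, contradicting $d_1<d_2$. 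So $\min_i d_i=1$ as you claim, and with the smallest weight equal to $1$ pairwise coprimality becomes $\gcd(a,b)=1$.

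The genuine gap is the one you flag yourself, and it cannot be waved past: the entire content of the theorem is in the second paragraph, where you need to upgrade Riemann--Roch/orbifold numerology on $E$ to the structural statement that $\operatorname{gr}_v\mathcal O_{X,P}$ is a polynomial ring, i.e. that $v$ is literally a monomial valuation and $E\cong\PP(d_1,d_2,d_3)$. The relative vanishing and Reid's orbifold Riemann--Roch give you the Hilbert function $\dim\mathcal O_{X,P}/\mathfrak a_n$ and constrain $E^3$ and the basket, but matching Hilbert functions with a weighted blow-up does not by itself force the Rees algebra to be the weighted-blow-up Rees algebra; you need genuine control of the local analytic structure of $Y$ along $E$, which is exactly what occupies the bulk of Kawakita's paper and involves a detailed comparison between $Y$ and a candidate weighted blow-up built from $(v(x),v(y),v(z))$, together with case analysis driven by the terminal classification. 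As written, the proposal is a plausible roadmap, with the central step asserted rather than proved; and in any case it is not something that can be ``checked against the paper'', since the paper only cites the result.
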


Very recently,  Sankaran (\cite{sa19}) has given an affirmative answer to Conjecture \ref{C:Birkar1} for the case $n=4$.

Compared with Conjecture \ref{C:Birkar1}, a more general conjecture of Birkar is:

\begin{conj}[Birkar] \label{Q:Birkar}
  For a fixed positive integer $n$ and a fixed positive real number
  $\epsilon\in (0,1]$, is there  a positive integer $M$ depending only on $n$ and $\epsilon$, such that if $X_{\textbf{a}}$
  is $\epsilon$-lc, then $a_1\leq M$, where $\textbf{a}=(a_1,a_2,\ldots,a_n)$, $a_1\leq a_2\leq \cdots \leq a_n$ and $a_1,a_2,\ldots,a_n$ are coprime.
\end{conj}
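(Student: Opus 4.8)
The plan is to translate the statement into a problem about minimal log discrepancies of toric cyclic quotient singularities and then to settle the cases $n\le 3$ by a descent argument.

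\textbf{Step 1 (toric reduction).} I would describe $X_{\mathbf a}$ as the toric variety of the star subdivision of the cone $\sigma=\langle e_1,\dots,e_n\rangle$ of $\mathbb A^n$ at the primitive vector $v=(a_1,\dots,a_n)$; its maximal cones are the simplicial cones $\sigma_i=\langle e_1,\dots,\widehat{e_i},\dots,e_n,v\rangle$, the cone $\sigma_i$ having multiplicity $a_i$. Computing the barycentric coordinates of an arbitrary lattice point of $\sigma_i$ against the primitive ray generators gives the closed formula
\[
\mathrm{mld}(X_{\mathbf a})=\min\left(1,\ \min_{i}\ \min_{1\le k\le a_i-1}\ \frac{k+\sum_{j\ne i}\bigl((-a_jk)\bmod a_i\bigr)}{a_i}\right).
\]
Hence, for $\epsilon\in(0,1]$, the variety $X_{\mathbf a}$ is $\epsilon$-lc if and only if
\[
(\star)\qquad k+\sum_{j\ne i}\bigl((-a_jk)\bmod a_i\bigr)\ \ge\ \epsilon\,a_i\qquad\text{for all }i\text{ and all }1\le k\le a_i-1 .
\]
It therefore suffices to prove that, for $n\le 3$, condition $(\star)$ forces $a_1\le M(n,\epsilon)$.

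\textbf{Step 2 (the surface case $n=2$).} Here $(\star)$ says precisely that the two torus-fixed points of $X_{\mathbf a}$, carrying cyclic quotient surface singularities of types $\tfrac1{a_1}\bigl((-a_2)\bmod a_1,\,1\bigr)$ and $\tfrac1{a_2}\bigl((-a_1)\bmod a_2,\,1\bigr)$, are both $\epsilon$-lc. A single such singularity being $\epsilon$-lc does not bound its index (the $A_m$-singularities are canonical for every $m$), but the two conditions are coupled through the Euclidean algorithm applied to $a_1\le a_2$: the data of the first singularity is governed by the continued fraction relating $a_1$ and $a_2\bmod a_1$ and the second by the reversed roles, so $(\star)$ propagates down the continued-fraction expansion of $a_2/a_1$ and forces its length, and hence $a_1$, to be bounded in terms of $\epsilon$. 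I would record this as a lemma: $X_{(a_1,a_2)}$ $\epsilon$-lc $\Rightarrow a_1\le M_2(\epsilon)$.

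\textbf{Step 3 (the case $n=3$).} Now each $\sigma_i$ carries a three-dimensional cyclic quotient singularity $\tfrac1{a_i}(\alpha_i,\beta_i,1)$ in which one weight is always $1$, with $\{\alpha_i,\beta_i\}=\{(-a_j)\bmod a_i:j\ne i\}$; the argument must combine the three conditions $(\star)_1,(\star)_2,(\star)_3$, since any one alone is insufficient (for instance $X_{(a_1,2a_1-1,2a_1+1)}$ is terminal along $\sigma_1$ for every $a_1$). First I would catalogue the three-dimensional cyclic quotient singularities $\tfrac1r(\alpha,\beta,1)$ that are $\epsilon$-lc with $r$ large: keeping the minimal log discrepancy bounded below as $r\to\infty$ forces $(\alpha,\beta)$ into a bounded list of shapes — roughly, either two of $\{\alpha,\beta,1\}$ are anti-parallel modulo $r$ (in which case the relevant pairwise $\gcd$ is $\le 1/\epsilon$), or $\alpha+\beta+1$ lies within $O(1/\epsilon)$ of $0$ modulo $r$ (the near-Gorenstein range), together with bounded auxiliary parameters. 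Applying this to $\sigma_1$ under the assumption $a_1>M$ pins the residues $(a_2\bmod a_1,\ a_3\bmod a_1)$ to such a configuration. Feeding each resulting relation among $a_1,a_2,a_3$ into $(\star)_2$ and $(\star)_3$ produces a further Euclidean-type relation that lets one replace $(a_1,a_2,a_3)$ by a strictly smaller triple — e.g. decreasing $a_3$ by an integer combination of $a_1,a_2$ — which is again $\epsilon$-lc, or at worst $\epsilon$-lc after absorbing a bounded loss; iterating, one reaches a triple with $\max a_i$ bounded in terms of $\epsilon$, whence $a_1$ is bounded, a contradiction. The surface estimate of Step 2 is used on the codimension-two toric strata $D_{\langle e_i,v\rangle}$, whose transverse singularities have index $\gcd(a_j,a_k)$, to keep the pairwise gcds under control during the descent.

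\textbf{Main obstacle.} Steps 1 and 2 are routine; the difficulty is entirely in Step 3, and in two places. The first is the classification of $\epsilon$-lc three-dimensional cyclic quotient singularities $\tfrac1r(\alpha,\beta,1)$ of large index, which is genuinely more involved than the surface case because these form several distinct families (anti-parallel and near-Gorenstein types, each with parameters). The second, and the real heart of the matter, is organizing the descent so that $a_1$ provably remains above $M$ while $\max a_i$ decreases and the triple stays $\epsilon$-lc; this requires a case analysis indexed by which of the three pairwise gcds $\gcd(a_1,a_2),\gcd(a_1,a_3),\gcd(a_2,a_3)$ are nontrivial and by which shape $\sigma_1$ takes, with every bound kept uniform in $\epsilon$.
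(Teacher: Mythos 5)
Your Step~1 is a correct reformulation (modulo the degenerate case where some $(-a_jk)\bmod a_i=0$ and the corresponding point lies on the boundary), but it is a different reduction from the paper's. The paper instead reduces the conjecture to a single geometry-of-numbers statement: the simplex $C_n^\epsilon$ with vertices $0,\epsilon e_1,\ldots,\epsilon e_n,\epsilon\mathbf a$ must contain an interior lattice point once $a_1$ is large. It then settles $n=2$, the case $a_j/a_2\le a_1^\theta$, and $n=3$ by Dirichlet's (simultaneous) approximation theorem: choose rationals $p_i/q$ close to $a_i/a_1$ with $q\le\lfloor a_1^{1/n}\rfloor$, and show the ray $x\mapsto(x,p_2x/q,\ldots,p_nx/q)$ penetrates $C_n^\epsilon$ to depth greater than $q$, hence meets a lattice point; for $n=3$, when $a_3/a_2$ is too large for this, the paper projects $C_3^\epsilon$ to the $x_1x_2$-plane, applies the $n=2$ case to get a lattice point $(q,p)$, and shows the vertical fiber of $C_3^\epsilon$ over $(q,p)$ has length greater than $1$. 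This buys explicitness and elementarity (e.g.\ $M=\lfloor(2/\epsilon+1)^2\rfloor+1$ for $n=2$) and avoids any classification of singularities.

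Your Steps~2 and~3, by contrast, are only sketches with genuine gaps. In Step~2 the claim that ``$(\star)$ propagates down the continued-fraction expansion of $a_2/a_1$ and forces its length, and hence $a_1$, to be bounded'' is plausible but unproved: you would need a quantitative lemma tying $\epsilon$-lc of both torus-fixed points to the partial quotients of $a_2/a_1$, and you neither state nor prove one. The decisive gap is Step~3, and you acknowledge it: you invoke (a) a classification of $\epsilon$-lc three-dimensional cyclic quotient singularities $\tfrac1r(\alpha,\beta,1)$ of large index into ``anti-parallel'' and ``near-Gorenstein'' families, and (b) a descent that replaces $(a_1,a_2,a_3)$ by a strictly smaller $\epsilon$-lc triple. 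Neither is carried out. Item (a) is itself a nontrivial theorem for which no clean published form exists for arbitrary $\epsilon$; item (b) is not formulated precisely enough to check --- in particular you never explain why the replacement stays $\epsilon$-lc ``after absorbing a bounded loss,'' nor why $a_1$ cannot drop below $M$ during the descent, which would invalidate the contradiction you aim for. As written, the proposal correctly identifies the reduction and a plausible strategy, but does not constitute a proof for $n=3$, which is the main content of the paper.
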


  One can not expect all $a_i$ are bounded, see Example \ref{E:unbounded_b}.

 Sankaran and  Santos recently claim that Conjecture \ref{Q:Birkar} holds for $\epsilon=1$ and any $n$. That is the case of canonical singularities.

In the paper, we study Conjecture \ref{Q:Birkar}, and get the following partial cases.

\begin{thm}[Main Theorem]
  Conjecture \ref{Q:Birkar} holds if:

  \begin{itemize}
    \item[1)]  $n=2$.
    \item[2)]  $n=3$.
    \item[3)]  $\frac{a_j}{a_2}\leq a_1^{\theta}$ for all $3\leq j\leq n$, where $\theta$ is a fixed real number such that $0<\theta<\frac{1}{2n^2}$.
  \end{itemize}
\end{thm}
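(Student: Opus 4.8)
The plan is to compute the discrepancy of the unique exceptional divisor $E$ of the weighted blowup $X_{\mathbf a}\to\mathbb A^n$ and translate the $\epsilon$-lc condition into an explicit arithmetic inequality involving the weights. A standard computation gives $a(E;\mathbb A^n)=a_1+a_2+\cdots+a_n-1$, so $X_{\mathbf a}$ is $\epsilon$-lc at the generic point of $E$ precisely when $\sum a_i-1\ge -1+\epsilon$, which is automatic; the real constraint comes from the singularities of $X_{\mathbf a}$ itself along $E$. Indeed $X_{\mathbf a}$ is covered by charts with cyclic quotient singularities of type $\tfrac1{a_i}(a_1,\dots,\widehat{a_i},\dots,a_n)$ (suitably reduced), and $\epsilon$-lc-ness of such a quotient singularity forces, via the Reid--Tai criterion, that for every residue $k$ the fractional part sum $\sum_j \{k a_j/a_i\}$ be at least $\epsilon$ on the relevant range. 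First I would make this dictionary precise in Section~\ref{S:wblowup}, reducing Conjecture~\ref{Q:Birkar} to: if all these Reid--Tai sums are $\ge\epsilon$, then $a_1\le M(n,\epsilon)$.

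For part~(1), $n=2$: here $\gcd(a_1,a_2)=1$ and the only singularities are $\tfrac1{a_1}(1,-1)$-type (an $A$-type), which is always canonical — so actually the content is that being $\epsilon$-lc with $\epsilon>0$ is a genuine restriction only through a more careful local analysis; I expect $a_1$ is bounded by something like $2/\epsilon$ by a direct examination of the minimal discrepancy $\{k a_2/a_1\}$ over $k$, using that $a_2$ is invertible mod $a_1$ so the fractional parts $\{k a_2/a_1\}$ range over all of $\{0,1/a_1,\dots,(a_1-1)/a_1\}$; the minimum nonzero value is $1/a_1$, giving $1/a_1\ge\epsilon$ hence $a_1\le 1/\epsilon$. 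Part~(3) is designed to be the "easy asymptotic regime": when $a_j/a_2\le a_1^\theta$ for $j\ge 3$ with $\theta$ tiny, all the large weights are comparable, so a pigeonhole/equidistribution argument on the fractional parts $\{k a_j/a_1\}$ shows that unless $a_1$ is bounded one can choose $k$ making every term simultaneously small, contradicting the Reid--Tai bound; the exponent $\theta<\tfrac1{2n^2}$ is exactly what is needed to absorb the error terms in that simultaneous-smallness estimate (a Dirichlet-type simultaneous Diophantine approximation, losing a factor $a_1^{1/(n-1)}$ per coordinate).

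The heart of the paper, and the main obstacle, is part~(2), $n=3$. Here $\mathbf a=(a_1,a_2,a_3)$ with the three pairwise gcd's $d_{12}=\gcd(a_1,a_2)$, $d_{13}$, $d_{23}$ coprime in pairs, and the chart singularities are three cyclic quotients whose orders involve these gcd's. The plan is a case analysis according to the sizes of the $d_{ij}$: if they are all small, the weights are "almost coprime" and one runs a three-variable version of the equidistribution argument; the dangerous case is when some $d_{ij}$ is large, where the quotient singularity on the corresponding chart can have large index yet still be $\epsilon$-lc (this is why one cannot expect $a_2,a_3$ bounded — Example~\ref{E:unbounded_b}). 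I would handle that by writing $a_2=d_{12}b_2$, $a_3=d_{13}b_3$ and tracking how the Reid--Tai sum on the $a_1$-chart forces $a_1$ small through the interaction of the coprime pieces $b_2,b_3$ modulo $a_1$; the key estimate will be a lower bound on $\min_k\big(\{k a_2/a_1\}+\{k a_3/a_1\}\big)$ in terms of $a_1$ alone, which I expect to be of order $1/\sqrt{a_1}$ (the three-term Euclidean/continued-fraction structure), yielding $a_1\lesssim 1/\epsilon^2$. Making that lower bound unconditional — uniform over all admissible $(a_2,a_3)$ — is the technical crux, and I anticipate it requires a delicate continued-fraction or lattice-point argument rather than a soft counting bound.
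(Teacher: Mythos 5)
Your high-level dictionary is correct and essentially the same one the paper uses: the weighted blowup $X_{\mathbf a}$ is toric, so $\epsilon$-lc-ness is equivalent to the nonexistence of a lattice point $\mathbf w$ in the relative interior of a maximal cone with $\varphi(\mathbf w)<\epsilon$ (the paper's Lemma~\ref{L:GNP}), and this is exactly the dual/barycentric reformulation of the Reid--Tai fractional-part inequalities you invoke. But beyond that common starting point, both the $n=2$ and the $n=3$ arguments have genuine gaps.

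For $n=2$ you assert that the chart singularities are of type $\tfrac1{a_1}(1,-1)$, hence $A$-type and ``always canonical.'' That is false: the two torus-fixed points of $X_{(a,b)}$ with $\gcd(a,b)=1$ are quotient singularities $\tfrac1a(1,-b)$ and $\tfrac1b(-a,1)$, which are $A$-type only in the special case $b\equiv\pm1\pmod a$. For general coprime $(a,b)$ these are arbitrary Hirzebruch--Jung singularities, not canonical, and the relevant quantity is a two-term Reid--Tai sum $\{k/a\}+\{-kb/a\}$, not a single fractional part. Your subsequent ``minimum nonzero value is $1/a_1$, hence $a_1\le 1/\epsilon$'' treats it as a one-term problem and does not account for the coupling between the two terms over the same $k$; it therefore does not establish the claimed bound. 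The paper handles this correctly by a Dirichlet approximation of $b/a$ by $p/q$ with $q\le\lfloor\sqrt a\rfloor$ and then shows the line $y=(p/q)x$ meets $C_2^\epsilon$ in a segment of length $>q$ once $a>\lfloor(2/\epsilon+1)^2\rfloor+1$; note the resulting bound is quadratic in $1/\epsilon$, not linear.

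For $n=3$ you propose a case analysis on the pairwise gcd's $d_{ij}$, reducing to ``almost coprime'' pieces $b_2,b_3$, plus an anticipated lower bound $\min_k\big(\{ka_2/a_1\}+\{ka_3/a_1\}\big)\gtrsim a_1^{-1/2}$ ``via a delicate continued-fraction or lattice-point argument.'' That claimed lower bound is stated without proof and I see no route to it; indeed you yourself flag it as the technical crux that you have not resolved, so this is not a proof. The paper's route is different and much more economical: it never looks at the pairwise gcd's at all. Instead, it splits on the size of $a_3/a_2$ relative to $a_1^\theta$. If $a_3/a_2\le a_1^\theta$ one is in the regime of part~(3), handled by simultaneous Dirichlet approximation. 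If $a_3/a_2>a_1^\theta$, one projects $C_3^\epsilon$ to the $x_1x_2$-plane, runs the $n=2$ argument to get a lattice point $(q,p)$ in the projection, and then checks that the vertical fiber $x_1=q,\,x_2=p$ meets $C_3^\epsilon$ in an interval whose length grows like $\epsilon\, a_1^\theta/2$, hence eventually contains an integer. This projection--plus--tall-fiber trick is the key idea of the $n=3$ proof and is absent from your proposal.

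Your sketch for part~(3) is in the right spirit: the loss of $a_1^{1/(n-1)}$ per coordinate from simultaneous Dirichlet approximation, the role of the exponent $\theta<\tfrac1{2n^2}$ in keeping $\tfrac1{n(n-1)}-\theta>0$, all match the paper's Proposition~\ref{P:MainTh_3)}. But as written it is a sketch, not a proof, and the two other parts have the gaps above, so the proposal does not establish the Main Theorem.
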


The problem is a toric geometry problem. Conjecture \ref{Q:Birkar} can be reduced to a geometry of number problem, which is to prove the existence of an integral point in the interior of certain convex polytope. Then we can apply Dirichlet's approximation theorem (Theorem \ref{T:Diri_2}, Theorem \ref{T: Diri_S}). %For $n>3$, it is possible but I need more time.

\subsection*{Acknowledgements}
Part of the work is done during the author visiting DPMMS at University of Cambridge. The author should thank Caucher Birkar's invitation, DPMMS at Cambridge for stimulating research environment.  The author thanks Dasheng Wei and Huayi Chen for many useful discussions. The author thanks Jingjun Han, Jihao Liu, Lu Qi and Chuyu Zhou for valuable comments. The author is supported by NSFC grants (No. 11688101, 11771426 and 11621061).

\section{Preliminaries}

\subsection{Pairs and singularities}
A \emph{log pair} $(X,B)$ consists of a normal variety $X$ and an $\mathbb{R}$-divisor $B\geq 0$ such that $K_X+B$ is $\mathbb{R}$-Cartier. An \emph{extraction}
is a proper birational morphism of normal varieties.

If $(X,B)$
is a log pair and $\mu:\tilde{X}\rightarrow X$ is an extraction, there exists a unique divisor $\tilde{B}$ on $\tilde{X}$ such that $\mu^*(K_X+B)=K_{\tilde{X}}
+\tilde{B}$ and $\tilde{B}=\mu^{-1}B$ on $\tilde{X}\setminus\text{Exc}(\mu)$. The identity $\tilde{B}=\sum_{E\subset \tilde{X}}(1-a(E;X,B))E$ associates
to each prime divisor $E$ of $\tilde{X}$ a real number $a(E;X,B)$, called the \emph{log discrepancy} of $E$ with respect to $(X,B)$. For simplicity, we write
mld$(E;B)$ for mld$(E;X,B)$.

\begin{defn}[\cite{sh88}] The \emph{minimal log discrepancy} of a log pair $(X,B)$ at a proper Grothendieck point $\eta\in X$ is defined as
$$\text{mld}(\eta;X,B)=\inf_{c_X(E)=\eta}a(E;X,B)$$
  where the infimum is taken after all prime divisors on extractions of $X$ having $\eta$ as a center on $X$. We set by definition mld$(\eta_X;X,B)=0$.
\end{defn}

The log pair $(X,B)$ has only \emph{lc (klt, $\epsilon$-lc) singularities} if mld$(\eta;B)\geq 0$ (mld$(\eta;B)>0$, mld$(\eta;B)\geq \epsilon$) for every
proper point $\eta\in X$. $(X,B)$ has only \emph{canonical} (\emph{terminal}) \emph{singularities} if mld$(\eta;B)\geq 1$ (mld$(\eta;B)>1$) for every point
$\eta\in X$ of codimension at least 2.

\subsection{Complements}

Let $X$ be a normal variety. We say that there exists an $n$-\emph{complement} ($n$-\emph{klt complement}) for $K_X$, if there exists a pair $(X,B)$, such that $(X,B)$ is lc (klt), and $n(K_X+B)\sim 0$.

Let $f:X\rightarrow Z$ be a contraction, that is, $f_*\mathcal{O}_X=\mathcal{O}_Z$. We say that there exists an $n$-\emph{klt complement for} $K_X$ \emph{over a point} $z\in Z$, if there exists a pair $(X,B)$, such that $(X,B)$ is klt, and $n(K_X+B)\sim 0/z\in Z$.

We refer readers to \cite{sh00}, \cite{ps01}, \cite{ps09}, \cite{bi04}, \cite{bi19}, \cite{fmx19}, \cite{hls19}, \cite{sh19} and \cite{xu19} for more background and applications of complements.

\subsection{Weighted blowups} \label{S:wblowup}

To be self-contained, we quote \cite{ksc04} 6.38 for the definition of weighted blow-ups.

Weighted blow-ups can be described in terms of charts in a manner similar to the standard blowup, but subject to a group action. Recall that if we fix one
of the standard affine charts of the blowup of the origin in $\mathbb{A}^n$, the blowing up morphism is given there by
$$\pi_i':\mathbb{A}^n\rightarrow \mathbb{A}^n$$
$$(x_1',\ldots,x_n')\mapsto (x_i'x_1',\ldots,x_i'x_{i-1}',x_i',x_i'x_{x+1}',\ldots,x_i'x_n').$$
In other words, $\pi_i'$ is given by the formulas
$$x_j=x_j'x_i',\text{\ \ \ if\ \ \ }j\neq i\text{\ \ \ and\ \ \ }x_i=x_i'.$$
To describer the analog for weighted blow-ups, fix relatively prime positive integers $a_1,\ldots,a_n$. For each $i$ between 1 and $n$, define a morphism
$p_i:\mathbb{A}^n\rightarrow \mathbb{A}^n$ by
$$x_j=x_j'(x_i')^{a_j}\text{\ \ \ if\ \ \ }j\neq i\text{\ \ \ and\ \  \ }x_i=(x_i')^{a_i}.$$
The map $p_i$ is birational if and only if $a_i=1$; more generally, it has degree $a_i$. Note that the map is well defined on the orbits of the
$\mathbb{Z}_{a_i}$-action
$$(x_i',\ldots,x_n')\mapsto (\zeta^{-a_1}x_1',\ldots,\zeta^{-a_{i-1}}x_{i-1}',\zeta x_i',\zeta^{-a_{i+1}}x_{i+1}',\ldots,\zeta^{-a_n}x_n'),$$
where $\mathbb{Z}_{a_i}$ is a cyclic group of order $a_i$ generated by a primitive $a_i$-th root of unity $\zeta$. Therefore, $p_i$ descends
to a birational morphism $\pi_i$ from the quotient variety:
$$\pi_i:\mathbb{A}^n/\mathbb{Z}_{a_i}\xrightarrow{\pi_i} \mathbb{A}^n.$$
These maps $\pi_i$ patch together to give a birational projective morphism $$\pi:X_{\textbf{a}}=B_{(a_1,\ldots,a_n)}\mathbb{A}^n\rightarrow \mathbb{A}^n.$$
This is the \emph{weighted blowup} of $\mathbb{A}^n$ with weights $\textbf{a}=(a_1,a_2,\ldots,a_n)$.

\subsection{Minimal log discrepancies of toric varieties} \label{S:mld_toric}
We refer the reader to \cite{fu93} for definitions and basic notations of toric geometry. Let $X=T_Nemb(\Delta)$ be a toroidal embedding, and let
$\{B_i\}_{i=1}^r$ be the $T_N$-invariant divisors of $X$, where $N$ is the lattice, corresponding to the primitive vectors $\{v_i\}_{i=1}^r$ on the 1-dimensional faces of $\Delta$.
Let $B=\sum_i(1-a_i)B_i$ be an invariant $\mathbb{R}$-divisor. Let $\varphi$ be the piecewise linear function such that $\varphi(v_i)=a_i$ for every $i$.

Under the above assumptions, we have the following formula for the minimal log discrepancies of $(X,B)$ at the generic points of the orbits (\cite{am99}):
$$\text{mld}(\eta_{\text{orb}(\sigma)};B)=\inf\{\varphi(v)|v\in \text{rel int}(\sigma)\cap N\},\sigma\in \Delta.$$
Here, rel int$(\sigma)$ denotes the relative interior of the cone $\sigma\subseteq \mathbb{R}\sigma$, and orb$(\sigma)$ is the $T_N$-orbit corresponding to the cone
$\sigma\in\Delta$.

\begin{thm}[\cite{am99} Theorem 4.1] In the above notations, let $X=\sqcup_{\sigma\in \Delta} \text{orb}(\sigma)$ be the partition of $X$ into
$T_N$-orbits. For every cone $\sigma\in \Delta$ and every closed point $x\in\text{orb}(\sigma)$, the minimal discrepancy
$$\text{mld}(x,B)=\text{mld}(\eta_{\text{orb}{(\sigma})};B)+\text{codim}(\sigma).$$
\end{thm}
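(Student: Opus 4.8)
First I would note that both sides of the claimed identity are local at $x$, so I may replace $X$ by the smallest $T_N$-invariant affine open containing $x$, which is $U_\sigma=\Spec\C[\sigma^\vee\cap M]$. Choosing a splitting $N=N_\sigma\oplus N'$ with $N_\sigma=N\cap\R\sigma$ (possible since $N_\sigma$ is saturated) gives an isomorphism $U_\sigma\cong V\times T$, where $V$ is the affine toric variety of $\sigma$ regarded as a full-dimensional cone in $(N_\sigma)_\R$, of dimension $k=\dim\sigma$, and $T\cong(\C^*)^{\,n-k}$ is the complementary torus, of dimension $n-k=\codim(\sigma)$. Under this isomorphism $\mathrm{orb}(\sigma)$ becomes $\{p\}\times T$ with $p\in V$ the torus-fixed point, and, since the components of $B$ meeting $U_\sigma$ are exactly those indexed by the rays of $\sigma$ and all such rays lie in $N_\sigma$, the restriction $B|_{U_\sigma}$ becomes $B_V\times T$ for the induced boundary $B_V=\sum_{v_i\in\sigma(1)}(1-a_i)B^V_i$ on $V$. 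The subtorus $T$ acts on $(V\times T,B_V\times T)$ by translation on the second factor, preserving the pair and acting transitively on $\{p\}\times T$, so $\mld(x;X,B)$ does not depend on the choice of $x\in\mathrm{orb}(\sigma)$ and I may take $x=(p,1_T)$. Applying the quoted generic-point formula to $V$ — whose unique closed orbit is the reduced point $p$, with cone $\sigma$ — together with the identities $N_\sigma\cap\mathrm{rel\,int}(\sigma)=N\cap\mathrm{rel\,int}(\sigma)$ and $\varphi|_\sigma=\varphi_V$, gives $\mld(p;V,B_V)=\inf\{\varphi(v)\mid v\in\mathrm{rel\,int}(\sigma)\cap N\}=\mld(\eta_{\mathrm{orb}(\sigma)};X,B)$. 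Thus the theorem reduces to the product formula
\[
\mld\big((p,1_T);\,V\times T,\;B_V\times T\big)\;=\;\mld(p;V,B_V)+\dim T .
\]

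To prove $\le$ I would take a divisor $D$ over $V$ with $c_V(D)=p$ and $a(D;V,B_V)$ arbitrarily close to $\mld(p;V,B_V)$, realized on a log resolution of $(V,B_V)$; multiplying that resolution by $T$ yields a log resolution of $(V\times T,B_V\times T)$, and then successively blowing up the intersections — taken one codimension at a time — of the strict transform of $D\times T$ with the $\dim T$ coordinate slices $V\times\{t_i=1\}$ through $x$ produces, via the rule $a(F)=a(D_1)+\cdots+a(D_r)$ for the exceptional divisor of a blow-up of an SNC stratum $D_1\cap\cdots\cap D_r$, a divisor whose center is exactly $(p,1_T)$ and whose log discrepancy is $a(D;V,B_V)+\dim T$. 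To prove $\ge$ I would use the elementary inequality: if $H$ is a prime Cartier divisor through a closed point $y$ with $H\not\subset\Supp\Gamma$, then $\mld(y;Y,\Gamma)\ge\mld(y;Y,\Gamma+H)+1$, since $\mathrm{ord}_E(H)\ge1$ whenever $c_Y(E)=\{y\}\subset H$, so that $a(E;Y,\Gamma+H)=a(E;Y,\Gamma)-\mathrm{ord}_E(H)\le a(E;Y,\Gamma)-1$. Iterating this with $H=H_1,\dots,H_{\dim T}$ the coordinate hyperplanes $V\times\{t_i=1\}$ (all Cartier, all through $x$, meeting transversally along $V\times\{1_T\}\cong V$) gives $\mld((p,1_T);V\times T,B_V\times T)\ge\mld((p,1_T);V\times T,B_V\times T+\sum_i H_i)+\dim T$; and inversion of adjunction along the transverse complete intersection $V\times\{1_T\}$ — where the different vanishes because the slicing uses only coordinates of the smooth factor $T$ — identifies the last term with $\mld(p;V,B_V)$, completing the proof.

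The reductions in the first paragraph are routine toric geometry, and the $\le$ half is a somewhat fiddly SNC discrepancy computation whose one subtle point is verifying that the constructed divisor has center the single point $(p,1_T)$ rather than a larger subvariety. The genuine obstacle is the inversion-of-adjunction step in the $\ge$ half: precise inversion of adjunction for minimal log discrepancies is open in general, so I would need to invoke it only in the special "cylinder" situation at hand — cutting $V\times T$ by coordinate hyperplanes of the smooth factor — which is the classical case, equivalent to the statement that restricting to a general member of a base-point-free linear system drops the mld by exactly one. Finally, the non-lc case should be dispatched separately, but there all the mld's in sight equal $-\infty$ and the identity is vacuous.
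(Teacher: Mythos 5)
The paper does not prove this statement; it is quoted verbatim from Ambro \cite{am99}, Theorem~4.1, and used as a black box, so there is no in-paper argument to compare against. Judged on its own, your reduction to the product $U_\sigma\cong V\times T$ and the identification of $\mathrm{mld}(p;V,B_V)$ with $\mathrm{mld}(\eta_{\mathrm{orb}(\sigma)};X,B)$ via the generic-point formula are correct and are the standard toric moves. The $\le$ half of your product formula, while fiddly, is also sound: after a log resolution of $(V,B_V)$ the blow-up calculus for SNC strata gives a divisor over $V\times T$ with center $(p,1_T)$ and log discrepancy $a(D;V,B_V)+\dim T$.

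The gap is exactly where you flag it: the $\ge$ half. After cutting by the $\dim T$ Cartier slices $H_i$ you need
\[
\mathrm{mld}\bigl((p,1_T);\,V\times T,\;B_V\times T+\textstyle\sum_i H_i\bigr)\;\ge\;\mathrm{mld}(p;V,B_V),
\]
which is precisely the hard (inversion) direction of Shokurov's precise inversion of adjunction, iterated. Calling this ``the classical case'' does not discharge it: the classical precise statement (Ein--Musta\c{t}\u{a}--Yasuda) is proved for a \emph{smooth} ambient variety, and here the ambient $V\times T$ and the slice $V\times\{1_T\}\cong V$ carry the toric singularity of $V$. Cutting by a general member of a base-point-free system is known to \emph{preserve} lc/klt, but the exact ``drop by one'' identity for minimal log discrepancies that you need is not a citation-free consequence of that, and as written your argument would silently invoke an open conjecture. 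You can avoid the issue entirely by staying toric: the completion of $V\times T$ at $(p,1_T)$ is isomorphic to that of $V\times\mathbb{A}^{m}$ at $(p,0)$, where $m=\mathrm{codim}(\sigma)$, and $V\times\mathbb{A}^{m}$ is the toric variety of the maximal cone $\tilde\sigma=\sigma\times(\mathbb{R}_{\ge0})^{m}$ in $N_\sigma\oplus\mathbb{Z}^{m}$ with $(p,0)$ its fixed point. Since mld's are invariant under formal isomorphism, the generic-point formula (now applied to the zero-dimensional orbit, where it literally computes the mld at the closed point) gives
\[
\mathrm{mld}(x;X,B)=\inf\{\varphi(v)+\textstyle\sum_j t_j : v\in\mathrm{rel\,int}(\sigma)\cap N,\ t\in\mathbb{Z}_{>0}^{m}\}=\mathrm{mld}(\eta_{\mathrm{orb}(\sigma)};B)+m,
\]
with both inequalities in this last computation elementary. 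That route proves both $\le$ and $\ge$ at once and dispenses with inversion of adjunction.
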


\subsection{Weighted blowups and toric varieties}

By \cite{fu93}, it is easy to see that weighted blow-ups $X_{\textbf{a}}$ is a toric variety, where $\textbf{a}=(a_1,\ldots,a_n)$. The fan  $\Delta$ of $X_{\textbf{a}}$
is a subdivision of the first quadrant. Precisely, the ray through $(a_1,a_2,\ldots,a_n)$ subdivides the first quadrant into $n$ cones of dimension $n$, and the fan $\Delta$ is the collection of these $n$ cones of dimension $n$.

\begin{exa} The fan of $X_{(a,b)}$
\begin{center}
\includegraphics*[width=4cm]{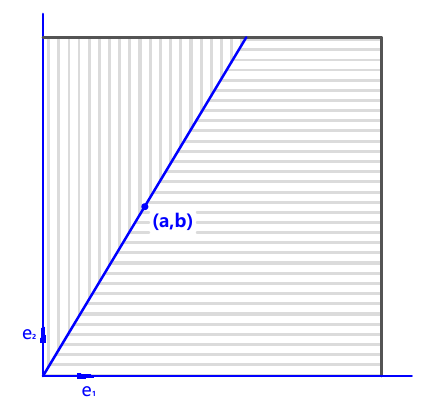}
\end{center}

\end{exa}

\begin{exa} The fan of $X_{(a,b,c)}$
\begin{center}
\includegraphics*[width=4cm]{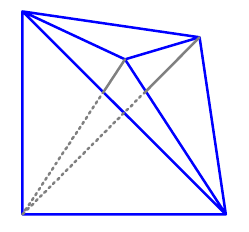}
\end{center}
\end{exa}

\subsection{Dirichlet's approximation theorem}

Dirichlet's Theorem is a fundamental result in Diophantine approxiation.

\begin{thm} \label{T:Diri_2} For any real number $\alpha$ and any positive integer $Z$, there exist integers $p$ and $q$ such that
$1\leq q\leq Z$ and $$|q\alpha-p|<\frac{1}{Z}.$$
\end{thm}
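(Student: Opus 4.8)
The plan is to prove this by the pigeonhole principle — this is precisely the argument that earned "Dirichlet's box principle" its name. Throughout, write $\{t\}=t-\lfloor t\rfloor\in[0,1)$ for the fractional part of a real number $t$. First I would consider the $Z+1$ real numbers
$$\{0\cdot\alpha\},\ \{1\cdot\alpha\},\ \ldots,\ \{Z\cdot\alpha\},$$
all of which lie in the half-open interval $[0,1)$.

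Next I would partition $[0,1)$ into the $Z$ subintervals $I_k=[\,k/Z,\ (k+1)/Z\,)$ for $k=0,1,\ldots,Z-1$. Since we are distributing $Z+1$ numbers among $Z$ boxes, the pigeonhole principle yields indices $0\le i<j\le Z$ with $\{i\alpha\}$ and $\{j\alpha\}$ lying in the same box $I_k$. Consequently $|\{j\alpha\}-\{i\alpha\}|<1/Z$, the inequality being strict because $I_k$ has length exactly $1/Z$ and is half-open.

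Finally I would set $q=j-i$ and $p=\lfloor j\alpha\rfloor-\lfloor i\alpha\rfloor$, both of which are integers, and note that $1\le q\le Z$ since $0\le i<j\le Z$. A direct computation gives
$$q\alpha-p=(j\alpha-\lfloor j\alpha\rfloor)-(i\alpha-\lfloor i\alpha\rfloor)=\{j\alpha\}-\{i\alpha\},$$
so $|q\alpha-p|<1/Z$, which is the desired conclusion.

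There is essentially no serious obstacle in this proof; the only points meriting a moment's care are verifying that the two chosen indices are genuinely distinct, so that $q\ge 1$ rather than merely $q\ge 0$, and that the half-open nature of the boxes $I_k$ produces the strict inequality $|q\alpha-p|<1/Z$ rather than the weaker $\le 1/Z$. Both are immediate from the construction above.
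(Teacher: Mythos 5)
Your pigeonhole argument is correct and is the classical proof of Dirichlet's approximation theorem. The paper itself states this result without proof, citing it as a standard fact from Diophantine approximation, so there is no in-paper argument to compare against; your proof fills that gap with exactly the argument one would expect, and the two points you flag (distinctness of $i$ and $j$, and strictness of the inequality from the half-open boxes) are handled correctly.
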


The simultaneous version of the Dirichlet's approximation theorem is:

\begin{thm}[Dirichlet's approximation theorem] \label{T: Diri_S}
 Given real numbers $\alpha_1,\ldots,\alpha_d$ and a positive integer $Z$, then there are integers $p_1,\ldots,p_d,q\in \mathbb{Z}, 1\leq q\leq Z$ such that
 $$\left|\alpha_i-\frac{p_i}{q}\right|\leq \frac{1}{qZ^{\frac{1}{d}}}.$$
\end{thm}

\subsection{Examples}

\begin{exa} \label{E:unbounded_b} $(X_{\textbf{n}},0)$, $\textbf{n}=(1,n)$, are $1$-lc for any positive integer $n$.

\begin{center}
\includegraphics*[width=4cm]{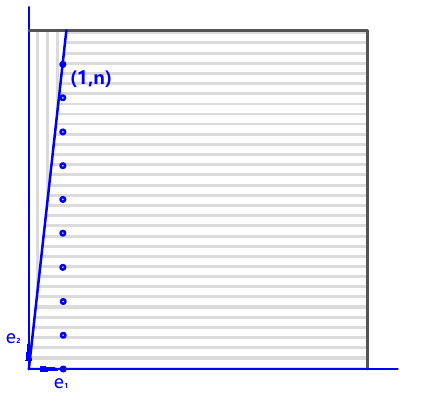}
\end{center}

Indeed, let $\sigma_1$ be the cone generated by $e_2=(0,1)$ and $v=(1,n)$. Then $\sigma_1$ is a smooth cone.  So mld$(x;0)=2$, where $x$
  is the toric invariant point corresponding to $\sigma_1$.

 Let $\sigma_2$ be the cone generated by $e_1=(1,0)$ and $v$.  The cone $\sigma_2$ is not smooth, but it has canonical singularities. We have mld$(y;0)=1$, where $y$ is the toric invariant point corresponding to $\sigma_2$.
  Indeed, the linear function $\varphi$ such that $\varphi(e_1)=\varphi(v)=1$ takes minimal values on the lattice points $w_i=(1,i),1\leq i\leq n-1$ in int $\sigma_2$
  and $\varphi(w_i)=1$.

  For any codimension 1 point $\eta$, mld$(\eta; 0)=1$.

  Therefore, $X_{\textbf{n}}$ are $1$-lc for any positive integer $n$.
\end{exa}

We can similar show the following example.

\begin{exa} $X_{\textbf{c}}$ is $1$-lc, where $\textbf{c}=(1,c_2,\ldots,c_n)$ for any $1\leq c_2\leq \cdots\leq c_n$ and gcd$(c_2,\ldots,c_n)=1$.
\end{exa}

\section{Reduction to a geometry of number problem}

\begin{lem} \label{L:GNP}
  Fix a positive integer $n$ and a positive real number $\epsilon\in (0,1]$. Let $C_n^{\epsilon}$ be the convex polytope with vertices $$\{\textbf{0},\epsilon \textbf{e}_1,
  \epsilon \textbf{e}_2,\ldots,\epsilon \textbf{e}_n,\epsilon \textbf{a}\},$$
  where $\{\textbf{e}_i\}_{i=1}^n$ is the standard basis of $\mathbb{R}^n$, $\textbf{a}=(a_1,\ldots,a_n)\in \mathbb{R}^n$, and $0\leq a_1\leq a_2\leq\cdots\leq a_n$.

  Then Conjecture \ref{Q:Birkar} holds, if there exists an integer $M$ depending only on $n$ and $\epsilon$, such that there exists an integral point in the interior of $C_n^{\epsilon}$ once $a_1\geq M$.
\end{lem}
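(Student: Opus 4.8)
The plan is to translate the $\epsilon$-lc condition on the toric variety $X_{\textbf{a}}$ into the non-existence of certain lattice points, and then contrapose. Recall from Section \ref{S:mld_toric} that if $\varphi$ is the piecewise linear function on the fan $\Delta$ of $X_{\textbf{a}}$ with $\varphi(\textbf{e}_i)=1$ for all $i$ and $\varphi(\textbf{a})=1$ (this is exactly the function whose value at $\textbf{a}$ records the discrepancy of the exceptional divisor of the weighted blowup, normalized so that $K_{X_{\textbf{a}}}=\pi^*K_{\mathbb{A}^n}+(\varphi(\textbf{a})-1)E$ up to the standard shift), the minimal log discrepancy at the generic point of the orbit of a cone $\sigma$ is $\inf\{\varphi(v)\mid v\in \mathrm{rel\,int}(\sigma)\cap N\}$, and by \cite{am99} Theorem 4.1 the mld at a closed point of $\mathrm{orb}(\sigma)$ adds $\mathrm{codim}(\sigma)$. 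So $X_{\textbf{a}}$ is $\epsilon$-lc if and only if for every cone $\sigma$ of the fan and every lattice point $v\in\mathrm{rel\,int}(\sigma)\cap N$ one has $\varphi(v)+\mathrm{codim}(\sigma)\geq \epsilon$. First I would check that the only constraint that can fail (for $\epsilon\le 1$) comes from the $n$-dimensional cones, i.e.\ from lattice points $v$ strictly inside one of the $n$ full-dimensional cones $\sigma_i=\langle \textbf{e}_1,\ldots,\widehat{\textbf{e}_i},\ldots,\textbf{e}_n,\textbf{a}\rangle$, where $\mathrm{codim}=0$; lower-dimensional cones automatically contribute at least $1\geq\epsilon$ once one observes $\varphi\geq 0$ on the first quadrant (indeed $\varphi$ is a positive combination of the barycentric-type coordinates and is nonnegative there).

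Next I would make the combinatorial observation that the union $\bigcup_i \mathrm{rel\,int}(\sigma_i)$, together with the rays they share, exhausts the interior of the first quadrant minus the ray through $\textbf{a}$; more precisely, a lattice point $v$ in the open first quadrant lies in the relative interior of some $\sigma_i$ unless it lies on one of the interior walls $\langle\textbf{e}_1,\ldots,\widehat{\textbf{e}_i},\ldots,\textbf{e}_n\rangle$-extended... — rather, the cleaner route is: $X_{\textbf{a}}$ fails to be $\epsilon$-lc precisely when there is a lattice point $v$ in the open first quadrant, not on the ray $\mathbb{R}_{\ge0}\textbf{a}$, with $\varphi(v)<\epsilon$. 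The point is that $\varphi$ is the function that is linear on each $\sigma_i$ and satisfies $\varphi\le \epsilon$ exactly on the compact polytope $C_n^{\epsilon}=\mathrm{conv}\{\textbf{0},\epsilon\textbf{e}_1,\ldots,\epsilon\textbf{e}_n,\epsilon\textbf{a}\}$ intersected with the first quadrant — because on each cone $\sigma_i$, $\varphi$ is the linear functional sending the generators $\textbf{e}_j$ ($j\ne i$) and $\textbf{a}$ to $1$, so $\{\varphi\le\epsilon\}\cap\sigma_i$ is exactly the simplex $\mathrm{conv}\{\textbf{0},\epsilon\textbf{e}_1,\ldots,\widehat{\epsilon\textbf{e}_i},\ldots,\epsilon\textbf{e}_n,\epsilon\textbf{a}\}$, and these simplices glue along $\textbf{0},\epsilon\textbf{a}$ to give $C_n^{\epsilon}$. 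Hence the existence of a lattice point $v$ with $\varphi(v)<\epsilon$ (equivalently $X_{\textbf{a}}$ not $\epsilon$-lc) is equivalent to the existence of a lattice point in $C_n^{\epsilon}$ that is not on the segment $[\textbf{0},\epsilon\textbf{a}]$ — and I would note that a lattice point strictly in the interior of $C_n^{\epsilon}$ in particular cannot lie on that boundary segment, so an interior lattice point suffices to witness the failure.

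With this dictionary in hand the lemma is immediate by contraposition. Suppose the hypothesis holds: there is $M=M(n,\epsilon)$ such that $a_1\ge M$ forces an integral point in the interior of $C_n^{\epsilon}$. Take any $\textbf{a}=(a_1,\ldots,a_n)$ with $a_1\le a_2\le\cdots\le a_n$ coprime and suppose toward a contradiction that $X_{\textbf{a}}$ is $\epsilon$-lc while $a_1> M$ — wait, I need $a_1\ge M$; since $a_1$ is an integer, $a_1> M-1$, and after replacing $M$ by $M+1$ we may assume the statement reads $a_1\ge M$. Then there is an interior lattice point $v$ of $C_n^{\epsilon}$; by the dictionary $\varphi(v)<\epsilon$ and $v$ lies in the open first quadrant off the ray through $\textbf{a}$, so $v$ sits in the relative interior of some cone $\sigma$ of $\Delta$ and $\mathrm{mld}(\eta_{\mathrm{orb}(\sigma)};0)\le\varphi(v)<\epsilon$, contradicting $\epsilon$-lc. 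Therefore $a_1<M$, which is exactly the conclusion of Conjecture \ref{Q:Birkar}.

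The only genuinely delicate point — the step I expect to need the most care — is the claim that the intersection of $C_n^{\epsilon}$ with the fan is covered, cone by cone, by the simplices $\{\varphi\le\epsilon\}\cap\sigma_i$, and conversely that every lattice point of $C_n^{\epsilon}$ in the open quadrant actually lies in the relative interior of a \emph{full-dimensional} cone (so that \cite{am99} applies with codimension $0$ and one does not accidentally pick up a $+1$ from a boundary wall that would spoil the inequality). I would handle this by a direct coordinate computation: the interior walls of $\Delta$ are the hyperplanes spanned by $\textbf{a}$ and all but one of the $\textbf{e}_i$, and a short linear-algebra check shows that on such a wall $\varphi$ already equals a nonnegative combination forcing $\varphi\ge$ something $\ge\epsilon$ unless the point is $\textbf{0}$ or on $\mathbb{R}\textbf{a}$ — or, more cleanly, one shifts an offending boundary lattice point slightly into an adjacent open cone without increasing $\varphi$ past $\epsilon$, using convexity of the sublevel set. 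Everything else is bookkeeping with the mld formula already quoted in Section \ref{S:mld_toric}.
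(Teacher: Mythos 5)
Your proof is correct and takes essentially the same route as the paper: you reduce to the toric mld formula, observe that on each maximal cone $\sigma_i$ the piecewise-linear function $\varphi$ (with $\varphi(\textbf{e}_j)=\varphi(\textbf{a})=1$) cuts out exactly the simplex on $\{\textbf{0},\epsilon\textbf{e}_j\ (j\neq i),\epsilon\textbf{a}\}$, so that an interior lattice point of $C_n^{\epsilon}$ has $\varphi<\epsilon$, hence some orbit has $\mathrm{mld}<\epsilon$, and you contrapose. The paper does this slightly more directly by writing the interior point as $\sum_{i}\lambda_i\epsilon\textbf{e}_i+\lambda_0\epsilon\textbf{a}$ with $0<\sum\lambda_i<1$ in a single cone and evaluating $\varphi$; your version makes the identification $C_n^\epsilon=\{\varphi\le\epsilon\}\cap(\text{first quadrant})$ more explicit, which is a worthwhile clarification.

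One remark on your closing paragraph: the worry that a lattice point on a wall between two maximal cones would ``pick up a $+\mathrm{codim}(\sigma)$'' is unfounded. The formula $\mathrm{mld}(\eta_{\mathrm{orb}(\sigma)};B)=\inf\{\varphi(v):v\in\mathrm{rel\,int}(\sigma)\cap N\}$ applies to every cone $\sigma$, not just maximal ones, and $\eta_{\mathrm{orb}(\sigma)}$ is already a proper Grothendieck point whenever $\sigma\neq\{0\}$; the $+\mathrm{codim}(\sigma)$ term in the cited Theorem~4.1 is only for passing to a \emph{closed} point of the orbit, which you do not need. Since the definition of $\epsilon$-lc quantifies over all proper points $\eta$, a lattice point in the relative interior of any nonzero cone with $\varphi(v)<\epsilon$ already contradicts $\epsilon$-lc. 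So you can drop the proposed ``shift into an adjacent open cone'' patch (which in any case would not preserve integrality), and your main argument already handles this correctly as written.
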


\begin{exa}
The graphs of $C_2^{\epsilon}$ and $C_3^{\epsilon}$ are as follows

\begin{center}
\includegraphics*[width=4cm]{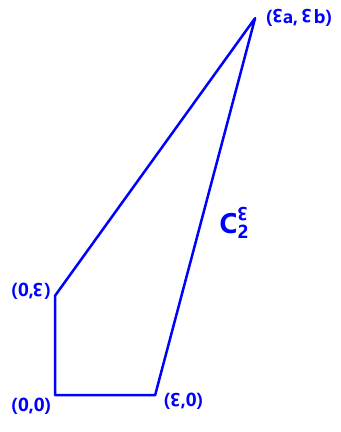}
\includegraphics*[width=3cm]{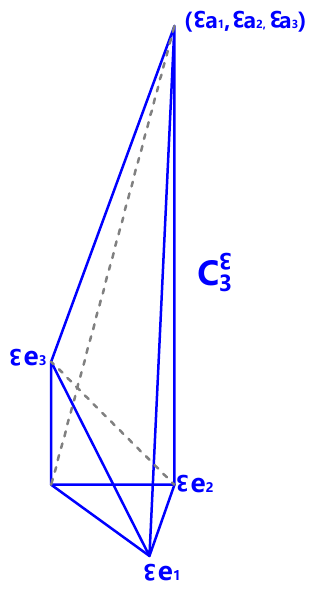}
\end{center}
\end{exa}

\begin{proof}[Proof of Lemma \ref{L:GNP}] Suppose there exists such an integer $M$ only depending on $n$ and $\epsilon$, that once $a_1\geq M$ then there would exist an integral point $\textbf{w}$ in the interior of $C_n^{\epsilon}$. By assumption, gcd$(a_1,\ldots,a_n)=1$, then the integral point $\textbf{w}$ lies in the interior of a unique cone generated by $\textbf{a}$ and $n-1$ vectors of $\{\textbf{e}_1,\ldots,\textbf{e}_n\}$. Without loss of generality, we can suppose that $\textbf{w}\in$ rel int $\tau$, where $\tau$ is the cone generated by $\{\textbf{a},\textbf{e}_1,\ldots,\textbf{e}_{n-1}\}$. Moreover, since $\textbf{w}$ is in the interior of $\tau\cap C_n^{\epsilon}$, we have
$$\textbf{w}=\sum_{i=1}^{n-1}\lambda_i\cdot \epsilon \textbf{e}_i+\lambda_0\cdot \epsilon \textbf{a},$$
  where $0\leq \lambda_i\leq 1$ for $i=0,\ldots,n-1$ and $0<\sum_{i=0}^{n-1}\lambda_i<1$.

  View $X_{\textbf{a}}$ as the log pair $(X_{\textbf{a}},0)$. Let $\varphi$ be the piecewise linear function with $\varphi(\textbf{e}_i)=1$ for $i=1,\ldots,n-1$ and $\varphi(\textbf{a})=1$. Therefore,
  $$\varphi(\textbf{w})=\sum_{i=1}^{n-1}\epsilon \lambda_i\varphi(\textbf{e}_i)+\epsilon \lambda_0 \varphi(\textbf{a})=\epsilon \sum_{i=0}^{n-1}\lambda_i<\epsilon.$$

  Apply the formula of minimal log discrepancies of toric varieties in Section \ref{S:mld_toric}, we get
  $$\text{mld}(\eta_{\text{orb}(\tau)})=\inf\{\varphi(\textbf{v})|\textbf{v}\in \text{rel int}(\tau)\cap N\}\leq \varphi(\textbf{w})<\epsilon.$$
  Hence $X_{\textbf{a}}$ is not $\epsilon$-lc, if $a_1\geq M$.
\end{proof}

\section{Proof of Main Theorem for $n=2$}

As we discussed in Lemma \ref{L:GNP}, to prove Main Theorem for $n=2$, it suffices to prove Lemma \ref{L:GNP} for $n=2$. We shall see that $M$ can be taken $\lfloor (\frac{2}{\epsilon}+1)^2\rfloor +1$, where $\lfloor\ \rfloor$ is the round down function.

\begin{proof}[Proof of Lemma \ref{L:GNP} for $n=2$]

Let $$L_1: y-\epsilon=\frac{b-1}{a}x$$ be the line passing through the points $(0,\epsilon)$ and $(\epsilon a,\epsilon b)$,
$$L_2: y=\frac{b}{a-1}(x-\epsilon)$$
the line passing through the points $(\epsilon,0)$ and $(\epsilon a,\epsilon b)$.
Let $\alpha=\frac{b}{a}$, and $Z=\lfloor\sqrt{a}\rfloor$. Then $\alpha\geq 1$ by assumption that $a\leq b$. Apply Dirichlet approximation Theorem \ref{T:Diri_2}, there exist integers
$p$ and $q$ such that $1\leq q\leq Z$ and  $|\alpha-\frac{p}{q}|<\frac{1}{qZ}$.

Let $$L:y=\frac{p}{q}x$$ be a line, and the point $(x_0,y_0)$ be the intersection point of $L$ and $L_1$ or $L_2$. The idea is to show that if $a$
is large enough, then the $x_0>q$. So there exists at least one integral point on the line $L$, and thus an integral point in the interior of $C_2^{\epsilon}$.

\medskip

1) \underline{\emph{Case 1.}} $\frac{p}{q}\leq \frac{b}{a}$. In the case, $L$ intersects with $L_2$.

\begin{center}
\includegraphics*[width=4cm]{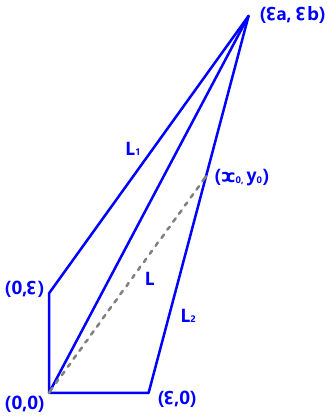}
\end{center}

Notice that $\frac{p}{q}>\frac{b}{a}-\frac{1}{qZ}$ and $qZ\leq Z^2\leq a$. So
$$\frac{b}{a-1}>\frac{b}{a}\geq \frac{p}{q}>\frac{b}{a}-\frac{1}{qZ}>0.$$

In the case, $$\begin{array}{rcl}
  x_0&=&\frac{\epsilon \frac{b}{a-1}}{\frac{b}{a-1}-\frac{p}{q}}>\frac{\epsilon\frac{b}{a-1}}{\frac{b}{a-1}-\left(\frac{b}{a}-\frac{1}{qZ}\right)}\\
  \\
  &=&
\frac{\epsilon}{1-\frac{a-1}{b}\cdot \frac{b}{a}+\frac{a-1}{b}\cdot \frac{1}{qZ}}=\frac{\epsilon}{\frac{1}{a}+\frac{a-1}{b}\frac{1}{qZ}}\\
\\
&>&\frac{\epsilon}{\frac{1}{a}+\frac{1}{qZ}}\geq q\frac{Z\epsilon}{2}
\end{array}
$$
The last inequality follows from $qZ\leq Z^2\leq a$.

\smallskip

Let $M=\lfloor(\frac{2}{\epsilon}+1)^2\rfloor+1$. If $a>M$, then $\sqrt{a}>\frac{2}{\epsilon}+1$ and thus $$x_0>q\frac{Z\epsilon}{2}=q\frac{\lfloor\sqrt{a} \rfloor
\epsilon}{2}>q.$$
Hence there exists at least a lattice point in the interior of $C_2^{\epsilon}$.

\medskip

2) \underline{\emph{Case 2.}} $\frac{p}{q}> \frac{b}{a}$. In the case, $L$ intersects with $L_1$.

\begin{center}
\includegraphics*[width=4cm]{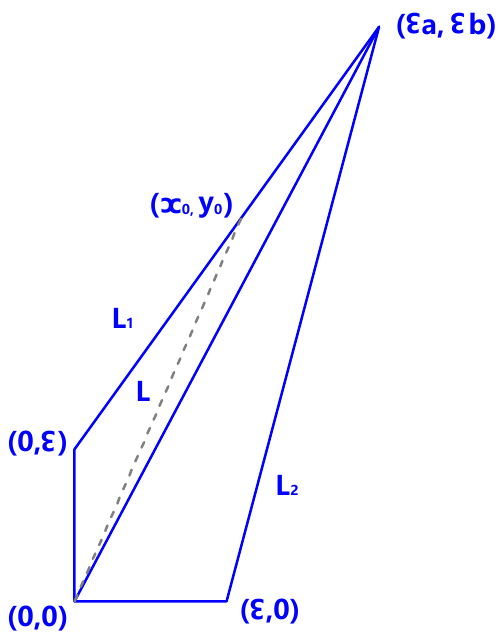}
\end{center}

Notice that $\frac{p}{q}<\frac{b}{a}+\frac{1}{qZ}$ and $qZ\leq Z^2\leq a$.

In the case, $$x_0=\frac{\epsilon}{\frac{p}{q}-\frac{b-1}{a}}>\frac{\epsilon}{\frac{b}{a}+\frac{1}{qZ}-\frac{b-1}{a}}=\frac{\epsilon}{\frac{1}{a}+\frac{1}{qZ}}
\geq q\frac{Z\epsilon}{2}$$

If $a>M=\lfloor(\frac{2}{\epsilon}+1)^2\rfloor+1$, then $\sqrt{a}>\frac{2}{\epsilon}+1$ and thus $$x_0>q\frac{Z\epsilon}{2}=q\frac{\lfloor\sqrt{a} \rfloor
\epsilon}{2}>q.$$
Hence there exists at least a lattice point in the interior of $C_2^{\epsilon}$.
\end{proof}

\section{Proof of Main Theorem 3)}

In the section, we consider general case of $n$.

Let $\Pi_i,1\leq i\leq n,$ be the hyperplane passing through the set of vectors
$\{\epsilon \textbf{e}_1,\epsilon \textbf{e}_2,\ldots,\epsilon\textbf{e}_n,\epsilon \textbf{a}\}\setminus\{\epsilon \textbf{e}_i\}$. Then the equation for $\Pi_i$ is:
$$\frac{\sum_{1\leq j\neq i\leq n}a_j-1}{a_i}x_i=\sum_{1\leq j\neq i\leq n}x_j-\epsilon.$$

Apply Dirichlet Approximation, Theorem \ref{T: Diri_S}, to the given real numbers $\frac{a_2}{a_1},\ldots,\frac{a_n}{a_1}$ and the natural number $Z=\lfloor
a_1^{\frac{1}{n}}\rfloor$, then there are integers $p_2,\ldots,p_{n}$ and $1\leq p_1\leq Z$, such that $$\left|\frac{a_i}{a_1}-\frac{p_i}{p_1}\right|\leq
 \frac{1}{p_1 Z^{\frac{1}{n-1}}},\text{\ \ \ for \ }2\leq i\leq n,$$
 and $p_1Z^{\frac{1}{n-1}}\leq Z^\frac{n}{n-1}\leq a_1^{\frac{1}{n-1}}\leq a_1$.

Let $L$ be the line defined by $$\left\{\begin{array}{rcl}
  x_2&=&\frac{p_2}{p_1}x_1\\
  &\vdots&\\
  x_n&=&\frac{p_n}{p_1}x_1
\end{array}\right.$$

Let $\lambda_1=0,\lambda_i=\frac{p_j}{p_1}-\frac{a_j}{a_1},j=2,\ldots,n$. By Dirichlet Approximation, $|\lambda_j|<\frac{1}{p_1Z^{\frac{1}{n-1}}}$.

Rewrite the equations of $\Pi_i$, we get
$$\sum_{j\neq i}(x_j-\frac{a_j}{a_i}x_i)+\frac{x_i}{a_i}=\sum_{j\neq i}(x_j-\frac{a_j}{a_1}x_1-
\frac{a_j}{a_i}(x_i-\frac{a_i}{a_1}x_1))+\frac{x_i}{a_i}=\epsilon.$$

The line $L$ intersects the hyperplane $\Pi_i$ for some $i$. Let $(x_1^0,x_2^0,\ldots,x_n^0)$ be the intersection point. Plug the equation of $L$ into
the equation of $\Pi_i$, we get
$$\sum_{j\neq i}\left((\frac{p_j}{p_1}-\frac{a_j}{a_1})x_1-\frac{a_j}{a_i}(\frac{p_i}{p_1}-\frac{a_i}{a_1})x_1\right)+\frac{p_i}{p_1}\frac{x_1}{a_i}=\epsilon.$$
Let $$A_i=\sum_{j \neq i}(\lambda_j-\frac{a_j}{a_i}\lambda_i)+\frac{1}{a_i}(\frac{a_i}{a_1}+\lambda_i)=\sum_{j \neq i}(\lambda_j-\frac{a_j}{a_i}\lambda_i)
+\frac{1}{a_1}+\frac{\lambda_i}{a_i}.$$
Then $x_1^0=\frac{\epsilon}{A_i}$.

The goal is to show there exists an integer $M$ depending on $n$ and $\epsilon$, such that if $a_1>M$, then $x_1^0>p_1$. Therefore, there exists at least one integral
point in the interior of the convex body $C_n^{\epsilon}$.

\begin{prop}[=Main Theorem 3)] \label{P:MainTh_3)}
 Suppose $\frac{a_j}{a_2}\leq a_1^{\theta}$ for all $3\leq j\leq n$, where $\theta$ is a real number  such that
$0<\theta<\frac{1}{2n^2}$. Then Conjecture \ref{Q:Birkar} holds.
\end{prop}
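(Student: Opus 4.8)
The plan is to bound $x_1^0 = \epsilon/A_i$ from below by controlling the size of $A_i$, just as in the $n=2$ case. Recall that $A_i = \sum_{j\neq i}\bigl(\lambda_j - \tfrac{a_j}{a_i}\lambda_i\bigr) + \tfrac{1}{a_1} + \tfrac{\lambda_i}{a_i}$, where each $|\lambda_j| < \tfrac{1}{p_1 Z^{1/(n-1)}}$ and $\lambda_1 = 0$. The term $\tfrac{1}{a_1}$ is harmless since $a_1 \geq Z^n$ forces $\tfrac{1}{a_1} \leq \tfrac{1}{p_1 Z^{n-1}}$, which is dominated by $\tfrac{1}{p_1 Z^{1/(n-1)}}$ for large $a_1$. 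The genuinely dangerous terms are the $\tfrac{a_j}{a_i}\lambda_i$: a priori the ratio $\tfrac{a_j}{a_i}$ can be enormous (this is exactly why one cannot expect all $a_i$ to be bounded). The hypothesis $\tfrac{a_j}{a_2} \leq a_1^{\theta}$ for $3 \leq j \leq n$ is precisely what tames these ratios.

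\medskip

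First I would split into cases according to which hyperplane $\Pi_i$ the line $L$ meets. The case $i = 1$ is the cleanest: then $\lambda_1 = 0$, so $A_1 = \sum_{j\neq 1}\lambda_j + \tfrac{1}{a_1}$, and $|A_1| \leq \tfrac{n-1}{p_1 Z^{1/(n-1)}} + \tfrac{1}{a_1} \leq \tfrac{n}{p_1 Z^{1/(n-1)}}$ for $a_1$ large, whence $x_1^0 = \epsilon/|A_1| \geq \tfrac{\epsilon p_1 Z^{1/(n-1)}}{n} > p_1$ as soon as $Z^{1/(n-1)} > n/\epsilon$, i.e. $a_1 > (n/\epsilon)^{n(n-1)}$. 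For $i \geq 2$, I would use the hypothesis: for $j \geq 2$ write $\tfrac{a_j}{a_i}$; since $a_2 \leq a_i$ we have $\tfrac{a_j}{a_i} \leq \tfrac{a_j}{a_2} \leq a_1^{\theta}$ when $j \geq 3$, and for $j = 2$, $\tfrac{a_2}{a_i} \leq 1$; the term $\tfrac{a_1}{a_i}\lambda_i$ with $j=1$ satisfies $\tfrac{a_1}{a_i} \leq 1$ too. Hence $\bigl|\tfrac{a_j}{a_i}\lambda_i\bigr| \leq \tfrac{a_1^{\theta}}{p_1 Z^{1/(n-1)}}$ for every $j \neq i$, so
$$|A_i| \leq \frac{(n-1)}{p_1 Z^{1/(n-1)}} + \frac{(n-1)a_1^{\theta}}{p_1 Z^{1/(n-1)}} + \frac{1}{a_1} + \frac{a_1^{\theta}}{p_1 Z^{1/(n-1)}} \leq \frac{2n\, a_1^{\theta}}{p_1 Z^{1/(n-1)}}$$
for $a_1$ large. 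Then $x_1^0 = \epsilon/|A_i| \geq \tfrac{\epsilon p_1 Z^{1/(n-1)}}{2n\, a_1^{\theta}}$, and since $Z = \lfloor a_1^{1/n}\rfloor \geq \tfrac{1}{2} a_1^{1/n}$ for $a_1 \geq 2^n$, we get $Z^{1/(n-1)} \geq (\tfrac12)^{1/(n-1)} a_1^{1/(n(n-1))}$, so $x_1^0 \geq c_n\, \epsilon\, p_1\, a_1^{1/(n(n-1)) - \theta}$ for an explicit constant $c_n$. Because $\theta < \tfrac{1}{2n^2} < \tfrac{1}{n(n-1)}$, the exponent $\tfrac{1}{n(n-1)} - \theta$ is strictly positive, so $x_1^0 > p_1$ once $a_1$ exceeds a bound $M$ depending only on $n$, $\epsilon$, $\theta$. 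An integral point then lies on $L$ inside $C_n^{\epsilon}$, and Lemma \ref{L:GNP} finishes the proof.

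\medskip

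The one point requiring care — and the main obstacle — is confirming that the sign conventions and the geometry genuinely force an integral point of $L$ into the \emph{open} polytope: one needs $L$ (a ray from the origin in the positive orthant) to cross the relevant face of $C_n^{\epsilon}$ at a parameter where $0 < \sum \lambda_i < 1$, so that the lattice point $(p_1, p_2, \ldots, p_n)$ on $L$ lands strictly inside. This is where I would mimic the $n=2$ argument: showing $x_1^0 > p_1$ places the lattice point $(p_1,\ldots,p_n)\in L$ strictly between the origin and the boundary hyperplane $\Pi_i$, hence in the interior, after checking that $L$ does not exit $C_n^\epsilon$ through some other face first — which follows because $L$ is chosen to approximate the ray through $\textbf{a}$, so it exits through the face "opposite" to some $\textbf{e}_i$, exactly the $\Pi_i$ we intersected. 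The inequality estimates above are otherwise routine; the exponent bookkeeping $\theta < \tfrac{1}{2n^2}$ (rather than the sharper $\tfrac{1}{n(n-1)}$) presumably leaves slack to absorb the crude constants $c_n$, $2n$, and the floor-function losses without tracking them precisely.
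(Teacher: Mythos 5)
Your proposal follows essentially the same route as the paper: the same Dirichlet-approximation setup with $Z=\lfloor a_1^{1/n}\rfloor$ and line $L$, the same case split $i=1$ versus $i\neq 1$, the same bound $|A_i|\leq 2n\,a_1^\theta/(p_1Z^{1/(n-1)})$ using the hypothesis $a_j/a_i\leq a_1^\theta$ for $i\neq 1$, and the same conclusion that the exponent $\tfrac{1}{n(n-1)}-\theta>0$ forces $x_1^0/p_1\to\infty$. The geometric caveat you raise at the end (that $L$ must exit the polytope through some $\Pi_i$ so the lattice point $(p_1,\ldots,p_n)$ lands strictly inside) is a genuine point the paper treats implicitly; your explanation of why it holds is sound.
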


\begin{proof}
Since $a_2\leq a_3\leq \cdots\leq a_n$, we get $\frac{a_j}{a_i}\leq a_1^{\theta}$ for $i\neq 1$.

If $i=1$, then $$A_1=\sum_{j\neq 1}\lambda_j+\frac{1}{a_1}<\frac{n-1}{p_1Z^{\frac{1}{n-1}}}+\frac{1}{a_1}\leq \frac{n}{p_1Z^{\frac{1}{n-1}}}$$
since $p_1Z^{\frac{1}{n-1}}<a_1$.

If $i\neq 1$, then $$0<A_i=\sum_{j \neq i}(\lambda_j-\frac{a_j}{a_i}\lambda_i)
+\frac{1}{a_1}+\frac{\lambda_i}{a_i}\leq \frac{n-1}{p_1Z^{\frac{1}{n-1}}}+(n-1)a_1^{\theta}\frac{1}{p_1Z^{\frac{1}{n-1}}}+\frac{1}{a_1}
+\frac{1}{a_1}\frac{1}{p_1Z^{\frac{1}{n-1}}}$$
since $\lambda_j<\frac{1}{p_1Z^{\frac{1}{n-1}}}$ for all $j$, $\frac{a_j}{a_i}\leq a_1^{\theta}$ and $a_i\geq a_1$ for all $i\neq 1$,

Since $a_1\geq 1$ and $p_1Z^{\frac{1}{n-1}}\leq a_1$, we have $$\frac{n-1}{p_1Z^{\frac{1}{n-1}}}\leq (n-1)a_1^{\theta}\frac{1}{p_1Z^{\frac{1}{n-1}}},$$
$$\frac{1}{a_1}\leq a_1^{\theta}\frac{1}{p_1Z^{\frac{1}{n-1}}},$$
$$\frac{1}{a_1p_1Z^{\frac{1}{n-1}}}\leq a_1^{\theta}\frac{1}{p_1Z^{\frac{1}{n-1}}}.$$

Therefore, $A_i\leq 2na_1^{\theta}\frac{1}{p_1Z^{\frac{1}{n-1}}}$ for all $i$.

So $$\frac{x_1^0}{p_1}=\frac{\epsilon}{p_1A_i}\geq \frac{\epsilon Z^{\frac{1}{n-1}}}{2na_1^{\theta}}>\frac{\epsilon(a_1^{\frac{1}{n}}-1)^{\frac{1}{n-1}}}{2n
a_1^{\theta}}>\frac{\epsilon}{4n}a_1^{\frac{1}{n(n-1)}-\theta}$$
As $a_1$ is sufficiently large, $(a_1^{\frac{1}{n}}-1)^{\frac{1}{n-1}}>\frac{1}{2}a_1^{\frac{1}{n(n-1)}}$. Since $\theta<\frac{1}{2n^2}$, $\frac{1}{n(n-1)}
-\theta>0$. Therefore, as $a_1$ sufficiently large, $\frac{x_1^0}{p_1}$ turns to infinity.
\end{proof}

\section{Proof of Main Theorem for $n=3$}

By Proposition \ref{P:MainTh_3)}, to prove Main Theorem for $n=3$, we only need to prove for the case $\frac{a_3}{a_2}>a_1^{\theta}$. In the case, visually the height of $C_3^{\epsilon}$, which is $a_3$, grows much faster than $a_2$. Then we reduce the case to the 2-dim case. Precisely, projecting $C_3^{\epsilon}$ onto the $x_1x_2$-plane. By induction, there exists at least one lattice point
$(q,p)$ in the projection as $a_1$ is sufficiently large. Then show that the interval of $C_3^{\epsilon}$ and the height line $L': x_1=q,x_2=p$ is greater than
1 as $a_1$ is sufficiently large. Hence there is a lattice point inside the convex body $C_3^{\epsilon}$ as $a_1$ is sufficiently large.

\begin{center}
\includegraphics*[width=8cm]{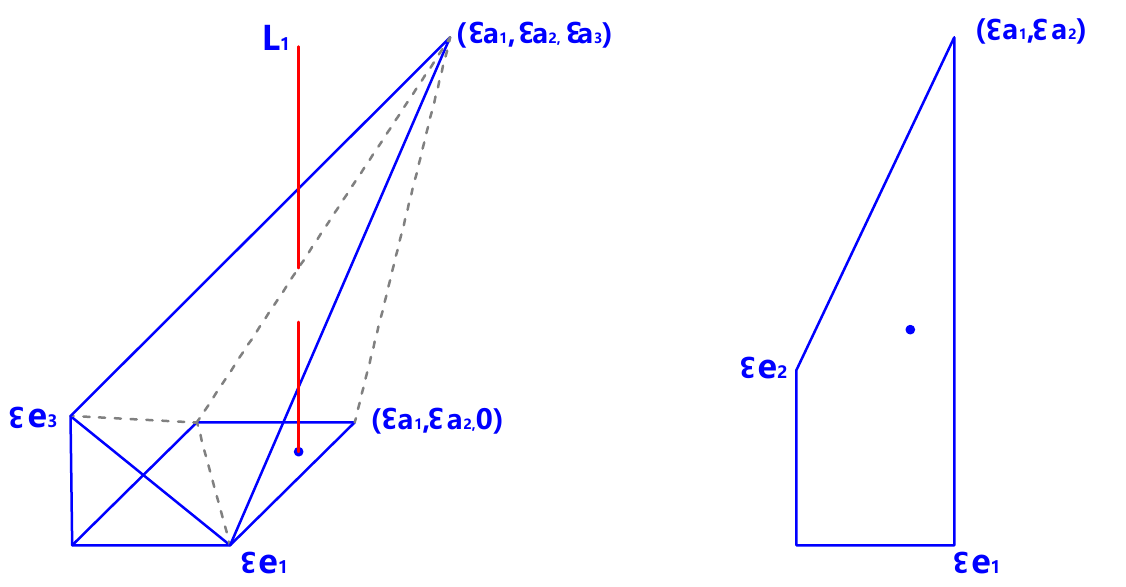}
\end{center}

\begin{proof}
Project $C_3^{\epsilon}$ onto the $x_1x_2$-plane. The convex set is as same as in 2-dimensional case. By Dirichlet approximation, let $M_2=\lfloor
\sqrt{a_1}\rfloor$, then there exist integers
$p$ and $q$ such that $$\left|\frac{a_2}{a_1}-\frac{p}{q}\right|<\frac{1}{qM_2},$$
and $1\leq q\leq M_2$. So $qM_2<a_1$.

For $a_1$ sufficiently large, the lattice point $(q,p)$ is in the projection. Let the line $L':x_1=q,x_2=p$. Then $L'$ intersects
$$\Pi_3:\frac{a_1+a_2-1}{a_3}x_3=x_1+x_2-\epsilon,$$
and $L'$ will intersect one of $$\Pi_1:\frac{a_2+a_3-1}{a_1}x_1=x_2+x_3-\epsilon$$ or $$\Pi_2:\frac{a_1+a_3-1}{a_2}x_2=x_1+x_3-\epsilon$$
 The goal is to show the interval $|L'_{23}|$ of $L'$ between $\Pi_2$ and $\Pi_3$, or the interval
$|L'_{13}|$ of $L'$ between $\Pi_1$ and $\Pi_3$ has length greater than 1 as $a_1$ is sufficiently large. Then there exists at least one lattice point
inside $C_3^{\epsilon}$.

A direct computation shows that
$$|L'_{13}|=\frac{a_2+a_3-1}{a_1}q-p+\epsilon-\frac{p+q-\epsilon}{a_1+a_2-1}a_3=(\frac{a_2-1}{a_1}q-p+\epsilon)+(\frac{q}{a_1}-\frac{p+q-
\epsilon}{a_1+a_2-1})a_3$$
Let $\lambda_2=-\frac{a_2}{a_1}+\frac{p}{q}$. Then $|\lambda_2|<\frac{1}{qM_2}$ and $qM_2<a_1$ by Dirichlet approximation.
So $$|L'_{13}|=-\frac{q}{a_1}-q\lambda_2+\epsilon+(\epsilon-\frac{q}{a_1}-\lambda_2q)\frac{a_3}{a_1+a_2-1}
\geq -\frac{1}{M_2}-\frac{1}{M_2}+\epsilon+(\epsilon-\frac{1}{M_2}-\frac{1}{M_2})\frac{a_3}{2a_2}$$
Since $\frac{a_3}{a_2}\geq a_1^{\theta}$ by assumption, we get
$$|L'_{13}| \geq(\epsilon-\frac{2}{M_2})+(\epsilon-\frac{2}{M_2})
\frac{a_1^{\theta}}{2}.$$
As $a_1$ turns to infinity, $|L'_{13}|$ will turn to infinity.

Similarly, a direct computation shows that
$$|L'_{23}|=\frac{a_1+a_3-1}{a_2}p-q+\epsilon-\frac{p+q-\epsilon}{a_1+a_2-1}a_3=(\frac{a_1-1}{a_2}p-q+\epsilon)+(\frac{p}{a_2}-
\frac{p+q-\epsilon}{a_1+a_2-1})a_3$$
Let $\lambda_2=-\frac{a_2}{a_1}+\frac{p}{q}$. Then $|\lambda_2|<\frac{1}{qM_2}$ and $qM_2<a_1$ by Dirichlet approximation.
So $$|L'_{23}|=-\frac{q}{a_1}+\frac{a_1-1}{a_2}q\lambda_2+\epsilon+(\epsilon-\frac{p}{a_2}+\frac{a_1}{a_2}q\lambda_2)\frac{a_3}{a_1+a_2-1}$$
Since $\lambda_2=\frac{p}{q}-\frac{a_2}{a_1}\leq \frac{1}{qM_2}$, we get $\frac{p}{a_2}\leq \frac{q}{a_1}+\frac{1}{a_2M_2}$.

Hence $$|L'_{23}|\geq (-\frac{1}{M_2}-\frac{1}{M_2}+\epsilon)+(\epsilon-\frac{q}{a_1}-\frac{1}{a_2M_2}-\frac{1}{M_2})\frac{a_3}{2a_2}
\geq (\epsilon-\frac{2}{M_2})+(\epsilon-\frac{2}{M_2}-\frac{1}{a_2M_2})\frac{a_1^{\theta}}{2}$$
As $a_1$ turns to infinity, $|L'_{23}|$ turns to infinity.
\end{proof}

\vspace{1cm}

Hua Loo-Keng Key Laboratory of Mathematics, Academy of Mathematics and Systems Science,
Chinese Academy of Sciences, No. 55 Zhonguancun East Road, Haidian District, Beijing, 100190, P.R.China\\
yifeichen@amss.ac.cn\\\\

\end{document}